\newcommand{\bst}{*} 
\newcommand{\cDst}{\cD_{\bst}}
\newcommand{\cDstA}{\cDst\mh\A}
\newcommand{\cDpunc}{\cD^\punc}
\newcommand{\cCst}{\cC_{\bst}}
\newcommand{\cCstA}{\cCst\mh\A}
\newcommand{\internalHtyThy}[2]{\bigl(#1\,,\, #2\bigr)} 
\NewDocumentCommand \htythy
\newcommand{\cSi}{\cS^{\,i}}
\title{Multifunctorial \texorpdfstring{$K$}{K}-Theory is an Equivalence of Homotopy Theories}
\date{04 October 2022}
\subjclass[2020]{18M65, 55P42, 55P48, 18F25}
\keywords{multifunctor, $K$-theory, connective spectra}
\begin{document}

\begin{abstract}
  We show that each of the three $K$-theory multifunctors from small permutative categories to $\mathcal{G}_*$-categories, $\mathcal{G}_*$-simplicial sets, and connective spectra, is an equivalence of homotopy theories.  For each of these $K$-theory multifunctors, we describe an explicit homotopy inverse functor.  As a separate application of our general results about pointed diagram categories, we observe that the right-induced homotopy theory of Bohmann-Osorno $\mathcal{E}_*$-categories is equivalent to the homotopy theory of pointed simplicial categories.

\end{abstract}

\maketitle

\tableofcontents
\section{Introduction}\label{sec:intro}

The Segal $K$-theory functor \cite{segal},
\[
  \Kse\cn \permcatsu \to \Sp_{\ge 0}
\]
provides a construction of connective spectra from small permutative categories.
Its domain, $\permcatsu$, is the category of small permutative categories and strictly unital symmetric monoidal functors.
Its codomain, $\Sp_{\ge 0}$, is the category of connective symmetric spectra.

In \cref{eq:summary}, below, $\Kse$ factors as a composite of three functors.
First, the functor we call Segal $J$-theory, $\Jse$, constructs $\Ga$-categories from small permutative categories.
Then, the levelwise nerve, $N_*$, takes $\Ga$-categories to $\Ga$-simplicial sets.
Finally, there is a construction of symmetric spectra from $\Ga$-simplicial sets, denoted $\Kf$.

The composite, $\Kse$, is known to be an equivalence of stable homotopy theories.
Indeed, it follows from work of Segal \cite{segal}, Bousfield-Friedlander \cite{bousfield-friedlander}, Thomason \cite{thomason}, and Mandell \cite{mandell_inverseK} that each of the intermediate constructions is so.

There is a multifunctorial replacement of Segal $K$-theory, developed by 
Elmendorf and Mandell \cite{elmendorf-mandell,elmendorf-mandell-perm}.
Multifunctoriality is a significant improvement, because it means that the Elmendorf-Mandell construction, $\Kem$, preserves algebraic ring and module structures.
Thus, the Elmendorf-Mandell construction provides a source of highly-structured ring and module spectra.

The Elmendorf-Mandell construction also factors as a composite of three functors, shown along the bottom of \cref{eq:summary}.
\begin{equation}\label{eq:summary}
  \begin{tikzpicture}[x=20mm,y=13mm,scale=1.6,vcenter]
    \draw[0cell] 
    (0,0) node (a) {\permcatsu}
    (a)+(30:1.1) node (b) {\GaCat}
    (a)+(-30:1.1) node (b') {\GsCat}
    (b)+(1,0) node (c) {\GasSet}
    (b')+(1,0) node (c') {\GssSet}
    (c)+(-30:1.1) node (d) {\Sp_{\ge 0}}
    ;
    \draw[1cell] 
    (a) edge node {\Jse} (b)
    (a) edge['] node {\Jem} (b')
    (b) edge node {N_*} (c)
    (b') edge node {N_*} (c')
    (c) edge node {\Kf} (d)
    (c') edge['] node {\Kg} (d)
    (b) edge[',transform canvas={shift={(-.1,0)}}] node[pos=.4] {\sma^*} (b')
    (b') edge node[',pos=.5] {i^*} (b)
    (b) edge[bend left=25,transform canvas={shift={(.2,0)}}] node {L} node['] {\vdash} (b')

    (c) edge[transform canvas={shift={(.15,0)}}] node[pos=.4] {\sma^*} (c')
    (c') edge[transform canvas={shift={(.05,0)}}] node[pos=.5] {i^*} (c)
    (c) edge[',bend right=25,transform canvas={shift={(-.2,0)}}] node {L} node['] {\dashv} (c')
    ;
    \draw[1cell]
    (a) [rounded corners=10pt] |- ($(b)+(0,.35)$)
    -- node {\Kse} ($(c)+(0,.35)$) -| (d)
    ;
    \draw[1cell]
    (a) [rounded corners=10pt] |- ($(b')+(0,-.35)$)
    -- node {\Kem} ($(c')+(0,-.35)$) -| (d)
    ;
    \draw[2cell]
    (a) +(-5:.55) node[rotate=-120, 2label={below,\Pi^*\!\!}] {\Rightarrow}
    ;
  \end{tikzpicture}
\end{equation}\ \\
One crucial difference is that $\Kem$ does not factor through $\Ga$-categories or $\Ga$-simplicial sets.
Instead, it uses diagrams on a more complicated but well-structured category $\Gskel$, whose objects are tuples of finite pointed sets.
\cref{sec:Ga-cat-Gs-cat} gives a further review of this background.
Our notations and terminology follow that of \cite{cerberusIII}, and we refer the reader there for a more thorough and unified treatment.
The 2-categorical concepts we use can be found in \cite{johnson-yau}.

It follows from \cite[Theorem~1.1]{elmendorf-mandell} that the connective spectra constructed by $\Kem$ are stably equivalent to those constructed by $\Kse$.  This implies that the multifunctorial $\Kem$ is also an equivalence of stable homotopy theories.  We provide more details of this implication in \cref{theorem:Kem-heq} below.

\subsection*{Main Question and Results}

The focus of this article is on the homotopy theory of $\Gstar$-categories.
One can ask whether there is a notion of stable equivalence for $\Gstar$-diagrams equivalent to that of $\Ga$-diagrams.
There are several approaches, and they lead to not-necessarily equivalent results.
In particular, the notions of stable equivalence created by the functors $\Kg$ and $i^*$ may not be the same.

\cref{corollary:Li-htyeq} shows that the stable equivalences created by $i^*$ are the ones that provide a homotopy theory of $\Gstar$-diagrams equivalent to that of $\Ga$-diagrams.
Thus, the multifunctorial $K$-theory in the title of this paper refers to each of
\begin{itemize}
\item $\Jem\cn \permcatsu \to \GsCat$,
\item $N_* \Jem \cn \permcatsu \to \GssSet$, and
\item $\Kem = \Kg N_* \Jem\cn \permcatsu \to \Sp_{\ge 0}$.
\end{itemize}
\cref{theorem:Jem-sma-N-hty-eq,theorem:Kem-heq} show that each of these is an equivalence of homotopy theories, with stable equivalences created by the vertical functors $i^*$ in \cref{eq:summary}.
These follow as applications of a general construction of right-induced homotopy theory, given in \cref{sec:right-induced}.
There, general conditions are developed for a functor $i$ such that the induced $i^*$ has a left adjoint $L$ and each is an equivalence of homotopy theories.

Thus, each arrow in \cref{eq:summary}, \emph{except for $\Kg$}, is an equivalence of stable homotopy theories.
In the triangle at left,
\[
  \Pi^* \cn \sma^* \Jse \to \Jem
\]
is a natural transformation that induces componentwise stable equivalences; see \cref{theorem:KgNPistar}.
In the middle square, 
\[
  N_* \sma^* = \sma^* N_*,
  \andspace
  i^* N_* = N_* i^*
\]
by associativity of functor composition.
The two adjunctions $L \dashv i^*$ follow from \cref{proposition:Li-adj}.
The triangle at right involving $\sma^*$ commutes \cite[9.3.16]{cerberusIII}:
\[
  \Kg \sma^* = \Kf.
\]

At the end of \cref{sec:applications} we briefly describe a separate application to the Bohmann-Osorno $K$-theory construction from \cite{bohmann_osorno}.  This application is independent from the rest of the work here, but follows from the same general results in \cref{sec:right-induced}.
\cref{corollary:BO-htyeq} shows that the right-induced homotopy theory of Bohmann-Osorno $\cE_*$-categories is equivalent to the homotopy theory of pointed simplicial categories.

In \cref{sec:hinv} we describe explicit homotopy inverses of the $K$-theory multifunctors $\Jem$, $N_* \Jem$, and $\Kem$.  The following is a summary table; its details are explained in \cref{sec:applications,sec:hinv}.
\begin{center}
\resizebox{\textwidth}{!}{
{\renewcommand{\arraystretch}{1.4}%
{\setlength{\tabcolsep}{1ex}
\begin{tabular}{|cc|cc|c|}\hline
\multicolumn{2}{|c|}{$K$-theory multifunctors} & \multicolumn{2}{c|}{Homotopy inverses} & Note \\ \hline
$\Jem \cn \permcatsu \to \GsCat$ & \cref{it:Jem-hty-eq} & $\cP i^* \cn \GsCat \to \permcatsu$ & \cref{eq:jeminv} & $i^*$ is not a multifunctor \\ \hline
$N_* \Jem \cn \permcatsu \to \GssSet$ & (\ref{it:Jem-hty-eq}, \ref{it:Nstar-hty-eq}) & $\cP S_* i^* \cn \GssSet \to \permcatsu$ & \cref{eq:njeminv} & $S_*$ is not a multifunctor \\ \hline
$\Kem \cn \permcatsu \to \Sp_{\geq 0}$ & (\ref{theorem:Kem-heq}) & $\cP S_* \bA \cn \Sp_{\geq 0} \to \permcatsu$ & \cref{eq:keminv} & $\bA$ is not a multifunctor \\ \hline
\end{tabular}}}}
\end{center}
\medskip

\begin{remark}[Open Questions About Alternatives]
  There are a number of alternative ways that one might define a class of stable equivalences in $\GssSet$ and $\GsCat$.  Among them are the following:
  \begin{itemize}
  \item Say that a morphism $f$ of $\Gstar$-simplicial sets is a \emph{$\Kf$-stable equivalence} if $\Kf i^*(f)$ is a stable equivalence.
  \item There is a functor $\#\cn \GssSet \to \GasSet$ that is left adjoint to the functor $\sma^*$ induced by the smash product of pointed finite sets.  Say that $f$ is a \emph{$\#$-stable equivalence} if $\#(f)$ is a stable equivalence of $\Ga$-simplicial sets.
  \end{itemize}
  It is an open question whether these notions of stable equivalence result in homotopy theories that are equivalent to each other, or whether either is equivalent to that of the \emph{$i^*$-stable equivalences} defined in \cref{sec:applications}.  Our results hold for the $i^*$-stable equivalences, created by $i^*$. 
\end{remark}

\subsection*{Acknowledgment}
We thank the referee for several helpful suggestions.

\section{\texorpdfstring{$\Gamma$}{Gamma}-Categories and \texorpdfstring{$\Gstar$}{Gstar}-Categories}
\label{sec:Ga-cat-Gs-cat}

In this section we recall the basic definitions of $\Ga$-categories and $\Gstar$-categories from \cite{segal} and \cite{elmendorf-mandell}, respectively.
A unified treatment is given in \cite[Chapters~8~and~9]{cerberusIII}.

\subsection*{$\Ga$-Diagrams}

We begin by recalling the indexing category $\Fskel$ for $\Ga$-diagrams.

\begin{definition}\label{definition:zero}
  A \emph{zero object} in a category $\C$ is an object $*$ that is both initial and terminal.
  Given a zero object $*$ in $\C$, a \emph{zero morphism} in $\C$ is a morphism that factors through $*$.
  A nonzero morphism is a morphism that does not factor through $*$.
  For objects $a$ and $b$ in $\C$, we write $\C^\punc(a,b)$ for the set of nonzero morphisms $a \to b$.
\end{definition}

\begin{definition}\label{definition:Fskel}
  Let $\Fskel$ denote the category whose objects are pointed finite sets $\ord{n} = \{0,\ldots,n\}$ for natural numbers $n \ge 0$ and whose morphisms are maps of sets that preserve the basepoint element $0$.
  The pointed finite set $\ord{0}$ is an initial and terminal object for $\Fskel$.

  The lexicographic order of smash products gives an isomorphism
  \[
    \ord{m} \sma \ord{n} \iso \ord{mn}
  \]
  making $(\Fskel,\sma,\ord{1})$ a small permutative category.  The symmetry isomorphism is induced by that of the smash product of pointed finite sets.  See, e.g., \cite[8.1.6]{cerberusIII} for further explanation of this structure.
\end{definition}

\begin{definition}
  Let $\Cat$ denote the category of small categories, and let $\boldone$ denote a chosen terminal category, with only a single object $*$ and only a single morphism $1_*$.
\end{definition}

\begin{definition}\label{definition:Ga-cat}
  A \emph{$\Ga$-category} is a basepoint-preserving functor
  \[
    (\Fskel,\ord{0}) \to (\Cat,\boldone).
  \]
  Morphisms of $\Ga$-categories are natural transformations.  Note that the basepoint component of a $\Ga$-category morphism is necessarily the identity, because $\boldone$ is terminal in $\Cat$.  The category of $\Ga$-categories and morphisms is denoted $\Gacat$.

  Likewise, a \emph{$\Ga$-simplicial set} is a basepoint-preserving functor
  \[
    (\Fskel,\ord{0}) \to (\sSet,*)
  \]
  and a morphism of such is given by a natural transformation.
  The category of $\Ga$-simplicial sets is denoted $\Gasset$. 
\end{definition}

\subsection*{$\Gstar$-Diagrams}

Next we recall the indexing category $\Gskel$ for $\Gstar$-diagrams.

\begin{definition}\label{definition:ufs}
  Let $\Inj$ denote the category whose objects are \index{unpointed finite sets}\index{finite sets!unpointed}\emph{unpointed finite sets}
  \[
  \ufs{p} = 
  \begin{cases}
    \{1, \ldots, p\} & \ifspace p > 0,\\
    \varnothing & \ifspace p = 0,
  \end{cases}
  \]
  for each natural number $p \ge 0$, and whose morphisms are \index{injection}injections
  \[
  f\cn \ufs{q} \hookrightarrow \ufs{p}.
  \]

  For each such injection $f$, define a functor
  \[
  f_*\cn \Fskel^q \to \Fskel^p
  \]
  called the \emph{reindexing injection} as follows.
  Suppose given $q$-tuples of pointed finite sets or pointed functions
  \[
    \ordtu{n} = (\ord{n}_1,\ldots,\ord{n}_q)
    \orspace
    \ang{\psi} = (\psi_1, \ldots, \psi_q)
    \in \Fskel^q,
  \]
  respectively.
  Define
  $f_*\ordtu{n}$ to be the $p$-tuple whose $j$th entry is $\ord{n}_{f^\inv(j)}$
  and
  $f_*\ang{\psi}$ to be the $p$-tuple whose $j$th entry is $\psi_{f^\inv(j)}$,
  where
  \[
    \ord{n}_\varnothing = \ord{1} \andspace
    \psi_{\varnothing} = 1_{\ord{1}}.\dqed
  \]
\end{definition}

\begin{definition}\label{definition:Fskel-roundsma}
  For each unpointed finite set $\ufs{q}$ with $q > 0$, define
  \[
  \Fskel^{(q)} = \Fskel^{\sma q},
  \]
  the $q$-fold smash power of pointed categories, where $\Fskel$ has
  basepoint $\ord{0}$.  We denote the objects and morphisms of $\Fskel^{(q)}$ as $q$-tuples
  \[
    \ordtu{n} = (\ord{n}_1,\ldots,\ord{n}_q)
    \andspace
    \ang{\psi} = (\psi_1,\ldots,\psi_q),
  \]
  respectively, where $\ordtu{n}$ is identified with the basepoint of $\Fskel^{(q)}$ if any $\ord{n}_i = \ord{0}$.
  A tuple $\ang{\psi}$ is a zero morphism if any $\psi_i$ factors through $\ord{0}$ in $\Fskel$.
  
  For $q = 0$, define
  \[
  \Ob\,\Fskel^{(0)} = \{*, \ang{}\}
  \]
  where $*$ is the basepoint object and $\ang{}$ is the empty tuple.
  Define the morphisms of $\Fskel^{(0)}$ such that $*$ is both initial and terminal and the only nonzero morphism is the identity morphism of $\ang{}$.
\end{definition}

\begin{definition}\label{definition:Gstar}
  We define a small pointed category $\Gskel$ as follows.
  \begin{description}
  \item[Objects] The set of objects is the wedge of pointed sets
    \[
    \Ob\,\Gskel = \bigvee_{q \ge 0} \Ob \big(\Fskel^{(q)}\big).
    \]

  \item[Morphisms] The basepoint $\starg$ is both initial and terminal
    in $\Gskel$.  The set of morphisms from a $q$-tuple $\ordtu{n}$ to
    a $p$-tuple $\ordtu{m}$ is
    \begin{align}
    \Gskel(\ordtu{n},\ordtu{m}) \label{eq:Gskelnm}
    & = \bigvee_{f\in \Inj(\ufs{q}\,,\,\ufs{p})}\;\Big(\,
    \Fskel^{(p)}\big(f_*\ordtu{n}, \ordtu{m}\big)
    \,\Big)\\
    & = \bigvee_{f\in \Inj(\ufs{q}\,,\,\ufs{p})}\;\Big(\,
    \Opsma_{j=1}^p \Fskel\big(\ord{n}_{f^\inv(j)}, \ord{m}_j\big)
    \,\Big). \nonumber
    \end{align}

    In \eqref{eq:Gskelnm} for $p > 0$ we denote a morphism by a pair
    $(f,\ang{\psi})$, where
    \[
    f\cn \ufs{q} \hookrightarrow \ufs{p}  \inspace \Inj
    \]
    and
    \[
    \ang{\psi}\cn f_*\ordtu{n} \to \ordtu{m}
    \]
    is a morphism in $\Fskel^{(p)}$.
    A morphism $( f, \ang{\psi} )$ is identified with the zero morphism in $\Gskel(\ordtu{n}, \ordtu{m})$ if there exists a component morphism
    \[
    \psi_j \cn \ord{n}_{f^\inv (j)} \to \ord{m}_j 
    \]
    that is a zero morphism in $\Fskel$, factoring through $\ord{0}$.

  \item[Identities]
    The identity on a $q$-tuple $\ordtu{n}$ is given by
    the pair $(1_{\ufs{q}},1_{\ordtu{n}})$.

  \item[Composition] The composite of
    morphisms
    \[
    \ordtu{n} \fto{(f,\ang{\psi})} \ordtu{m} \fto{(g,\ang{\phi})} \ang{\ord{\ell}}
    \]
    is given by the pair $(gf,\ang{\phi}\circ g_*\ang{\psi})$.
  \end{description}
  This finishes the definition of $\Gskel$.  The composition in
  $\Gskel$ is associative and unital since $(gf)_* = g_* \circ f_*$
  for composable injections $f$ and $g$.  Moreover, we note the
  following.
  \begin{itemize}
  \item We call $q$ the \emph{length} of a $q$-tuple $\ordtu{n}$.
  \item Each morphism set $\Gskel(\ordtu{n},\ordtu{m})$ is a pointed
    set with basepoint the zero morphism.
  \item For readability, we sometimes use a semicolon and write
    $\Gskel\mmap{\ordtu{m};\ordtu{n}}$ for the set of morphisms in
    $\Gskel$ from $\ordtu{n}$ to $\ordtu{m}$.\dqed
  \end{itemize}
\end{definition}

We define $\Gstar$-categories and $\Gstar$-simplicial sets as pointed diagrams, similar to the corresponding versions for $\Fskel$ in \cref{definition:Ga-cat}.
\begin{definition}\label{definition:Gs-cat}
  The category of $\Gstar$-categories is denoted $\GsCat$ and consists of basepoint-preserving functors
  \[
    (\Gskel,*) \to (\Cat,\boldone)
  \]
  together with natural transformations between them.  Likewise, the category of $\Gstar$-simplicial sets is denoted $\GssSet$ and consists of basepoint-preserving functors
  \[
    (\Gskel,*) \to (\sSet,*)
  \]
  together with natural transformations between them.
\end{definition}

\subsection*{Relating $\Fskel$ and $\Gskel$}

The category $\Gskel$ is a permutative category---that is, a \emph{strict} symmetric monoidal category---in which the monoidal product $\oplus$ is given by concatenation on objects and similarly for morphisms.  The monoidal unit is the empty sequence $\ang{}$.  We will use the following functors to compare $\Ga$-diagrams and $\Gstar$-diagrams.

\begin{definition}[{\cite[9.1.15]{cerberusIII}}]\label{definition:sma-functor}
  The smash product of pointed finite sets defines a strict symmetric monoidal basepoint-preserving functor
  \[
  \sma\cn (\Gskel,\oplus,\ang{},\starg) \to (\Fskel,\sma,\ord{1},\ord{0})
  \]
  given on objects by the following assignments:
  \begin{align*}
    \sma \ordtu{n} & = \ord{n_1 \cdots n_q} \forspace q > 0,\\
    \sma \ang{} & = \ord{1}, \andspace\\
    \sma \starg & = \ord{0}.
  \end{align*}
  The value of $\sma$ on a morphism $(f,\ang{\psi})$ is given by the smash product $\sma_j \psi_j$ together with lexicographic ordering and a permutation of factors induced by $f$.
  See \cite[9.1.15]{cerberusIII} for further details.
  There are functors
  \[
    \sma^* \cn \GaCat \to \GsCat \andspace \sma^*\cn \GasSet \to \GssSet
  \]
  induced by precomposition with $\sma$ on objects and whiskering with $\sma$ on morphisms.
\end{definition}

\begin{definition}\label{definition:i-incl}
  The \emph{length-one inclusion}
  \begin{equation}\label{eq:i-incl}
    i\cn \Fskel \to \Gskel
  \end{equation}
  sends each pointed finite set $\ord{n}$ to the length-one tuple $(\ord{n})$ and each morphism $\psi$ in $\Fskel$ to the pair $(1_{\ufs{1}},(\psi))$.
  There are functors
  \begin{equation}\label{eq:istar}
    i^* \cn \GsCat \to \GaCat \andspace i^* \cn \GssSet \to \GasSet
  \end{equation} 
  induced by precomposition with $i$ on objects and whiskering with $i$ on morphisms.
\end{definition}

The functor $i$ is fully faithful; this will be used in \cref{corollary:Li-htyeq} below.  Moreover, there is an equality
\[\sma \circ i = 1_{\Fskel} \cn \Fskel \to \Gskel \to \Fskel.\]
This will be used in the proof of \cref{it:smastar-hty-eq,it:smastarsset-hty-eq} below.  

We also note that, unlike $\sma$, the functor $i \cn \Fskel \to \Gskel$ is neither monoidal nor oplax monoidal.  In more detail, there are, in general, no morphisms
\[i(\ord{m}) \oplus i(\ord{n}) = (\ord{m}, \ord{n}) \to (\ord{mn}) = i(\ord{m} \sma \ord{n})\]
in $\Gskel$, so $i$ does not admit any monoidal constraint.  There are also no morphisms
\[i(\ord{1}) = (\ord{1}) \to \ang{}\]
in $\Gskel$, so $i$ does not admit any oplax unit constraint.

\section{Right-Induced Equivalences of Homotopy Theories}
\label{sec:right-induced}

In this section we describe the general framework that we will apply in \cref{corollary:Li-htyeq} to show that the functors
\[
  i^* \cn \GsCat \to \GaCat \andspace i^* \cn \GssSet \to \GasSet
\]
of \cref{eq:istar} induce equivalences of homotopy theories.  Equivalences of homotopy theories are phrased in terms of weak equivalences in the complete Segal space model structure on bisimplicial sets, which we recall in \cref{theorem:css-fibrant,def:eq-htpy} below.  

\subsection*{Motivation for Our Framework}

Before we recall the relevant definitions of equivalences of homotopy theories, let us briefly discuss its relationship with Quillen model categories \cite{hovey,hirschhorn,quillen-homotopical-algebra}.  Consider the diagram \cref{eq:summary}.  Several well-known model category structures on symmetric spectra are discussed in \cite{hss,mmss}.  Moreover, the categories $\GaCat$ and $\GasSet$ admit Reedy model structures, which are constructed using the chosen model structures on $\Cat$ and $\sSet$.  However, the category $\permcatsu$ is not known to admit a model category structure that is compatible with the one on $\GaCat$.  

Moreover, the $K$-theory functors $\Jse$, $\Kse$, $\Jem$, $\Kem$, and $\Kg$ are not known to admit adjoint functors.  Thus we cannot directly apply model category theory to these $K$-theory functors. 
On the other hand, each Quillen equivalence induces an equivalence of homotopy theories in the sense of \cref{theorem:css-fibrant,def:eq-htpy} below.
We discuss this point further in \cref{rk:DK} after the following definitions.

\subsection*{Equivalences of Homotopy Theories}

We briefly recall the following background concepts for homotopy theories and equivalences thereof.
Our work below will use only a special case from \cite[2.9]{gjo1}, stated in \cref{gjo29} below.
We refer the reader to \cite{johnson-yau-permmult} for details of the following.
See \cite{dwyer-kan,hirschhorn,toen-axiomatisation,barwick-kan} for further development of the theory.
\begin{definition}\ 
\begin{enumerate}
\item A \emph{relative category} is a pair $\htythy{\C,\cW}$ consisting of a category $\C$ and a subcategory $\cW$ containing all of the objects of $\C$.
  A \emph{relative functor} from $\htythy{\C,\cW}$ to $\htythy{\C',\cW'}$ is a functor from $\C$ to  $\C'$ that sends morphisms of $\cW$ to those of $\cW'$.

\item We say that a subcategory $\cW \subset \C$ satisfies the \emph{2-out-of-3} property if, for each composable pair of morphisms $f$ and $g$ in $\C$, whenever any two of $f$, $g$, and $gf$ are in $\cW$, then so is the third.

\item We say that a functor $F \cn \C \to \C'$ \emph{creates morphisms in $\cW$} if $\cW = F^\inv(\cW')$.
  
\item For each $n \ge 0$, the \emph{relative simplex category} $\htythy{\C,\cW}^{\De[n]}$ has objects given by functors $\Delta[n] \to \C$.
  The morphisms are natural transformations whose components are morphisms in $\cW$.

\item Each relative category $\htythy{\C,\cW}$ has a bisimplicial \emph{classification diagram} $\Ncl\htythy{\C,\cW}$ formed by taking levelwise nerve of the relative simplex categories $\htythy{\C,\cW}^{\De[n]}$ for $n \ge 0$.\dqed
\end{enumerate}
\end{definition}

The category of bisimplicial sets has a Reedy model structure \cite{bousfield-friedlander,goerss-jardine}.  Equivalences of homotopy theories are defined using a different model structure on bisimplicial sets, which we recall next.

\begin{definition}\label{definition:css}
A bisimplicial set $Y$ is called a \emph{complete Segal space} if it satisfies the following three conditions.
  \begin{itemize}
  \item $Y$ is fibrant in the Reedy model structure on bisimplicial sets.
  \item For each $n \ge 2$ the Segal morphism
    \[
    Y_n \fto{\sim} Y_1 \times_{Y_0} \cdots \times_{Y_0} Y_1
    \]
    is a weak equivalence of simplicial sets.
  \item The morphism
\begin{equation}\label{eq:css-char}
Y_0 \iso \Map(\De[0],Y) \fto{\sim} \Map(\mathbf{2},Y)
\end{equation}
is a weak equivalence of simplicial sets.  Here $\mathbf{2}$ is the discrete nerve of the category consisting of two isomorphic objects.  The morphism in \eqref{eq:css-char} is induced by the unique morphism $\mathbf{2} \to \De[0]$. \dqed
  \end{itemize}
\end{definition}

\begin{theorem}[{\cite[7.2]{rezk-homotopy-theory}}]\label{theorem:css-fibrant}
There is a simplicial closed model structure on the category of bisimplicial sets, called the \emph{complete Segal space model structure}, that is given as a left Bousfield localization of the Reedy model structure in which the fibrant objects are precisely the complete Segal spaces.
\end{theorem}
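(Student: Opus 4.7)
The plan is to realize this model structure as a left Bousfield localization of the Reedy model structure on bisimplicial sets, following Rezk's original strategy. The first task is to verify that the Reedy model structure satisfies the hypotheses needed to apply a general localization theorem (Hirschhorn's, in the cellular setting, or Smith's, in the combinatorial setting). Since the Kan--Quillen model structure on $\sSet$ is left proper, cellular, and cofibrantly generated, the induced Reedy structure on bisimplicial sets inherits each of these properties, and it acquires a simplicial enrichment using one of the two simplicial directions as the ``external'' variable.

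Having secured a Bousfield-localizable starting point, I would choose an explicit set $S$ of maps at which to localize. This set has two families. For the Segal condition, I take the spine inclusions $G[n] \hookrightarrow \Delta[n]$ for $n \geq 2$, where $G[n]$ is the union of $n$ copies of $\Delta[1]$ along consecutive vertices, regarded as bisimplicial sets constant in the second direction. For the completeness condition, I take the map $\Delta[0] \to \mathbf{2}$ (the inclusion picking out a chosen object of the walking isomorphism), again viewed bisimplicially. Hirschhorn's theorem then produces a simplicial left proper model structure, the $S$-local model structure, whose fibrant objects are exactly the Reedy fibrant bisimplicial sets that are $S$-local.

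It remains to identify the $S$-local Reedy fibrant objects with the complete Segal spaces of \cref{definition:css}. Locality with respect to the spine inclusions translates, via the simplicial enrichment and Reedy fibrancy (which makes the matching fibrations into actual fibrations, so strict pullbacks model homotopy pullbacks), into the statement that each Segal map
\[
Y_n \fto{\sim} Y_1 \times_{Y_0} \cdots \times_{Y_0} Y_1
\]
is a weak equivalence of simplicial sets. Locality with respect to $\Delta[0] \to \mathbf{2}$ unpacks directly into the condition that the induced map on mapping spaces, namely \cref{eq:css-char}, is a weak equivalence. Together, these are precisely the two nontrivial conditions defining a complete Segal space, so the fibrant objects of the localized model structure are exactly the complete Segal spaces.

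The main obstacle is the completeness half of the analysis: one must correctly construct $\mathbf{2}$ as a bisimplicial set and verify that its mapping object into a Reedy fibrant $Y$ computes the expected simplicial set of ``homotopy equivalences in $Y$,'' so that locality against $\Delta[0] \to \mathbf{2}$ really does reproduce \cref{eq:css-char}. A secondary but important point is that the localization must be simplicial, which is ensured by building all maps in $S$ from the external simplicial direction, so that the pushout-product axiom for the Reedy enrichment passes to the localization.
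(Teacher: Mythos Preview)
The paper does not prove this statement; it is simply quoted from Rezk with the citation \cite[7.2]{rezk-homotopy-theory} and no argument is given. Your sketch is a faithful outline of Rezk's original approach via left Bousfield localization at the spine inclusions and the completeness map, so there is nothing in the paper to compare it against.
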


\begin{definition}\label{def:eq-htpy}
Suppose $\htythy{\C,\cW}$ and $\htythy{\C',\cW'}$ are relative categories.  A relative functor
  \[
    F \cn \htythy{\C,\cW} \to \htythy{\C',\cW'}
  \]
  is an \emph{equivalence of homotopy theories} if the induced morphism $\mathsf{R}(\Ncl F)$ is a weak equivalence in the complete Segal space model structure, where $\mathsf{R}$ denotes fibrant replacement.
\end{definition}

\begin{remark}[Homotopy Categories]\label{rk:DK}
Continuing the discussion near the beginning of this section, let us point out that, in the presence of a Quillen model structure, \cref{def:eq-htpy} yields the same notion of an \emph{equivalence of homotopy categories} as in the model categorical sense.  To make this precise, first note that a relative functor 
\[F \cn (\C,\cW) \to (\C',\cW')\]
induces a functor between the categorical localizations
\begin{equation}\label{Fbar}
\ol{F} \cn \C[\cW^\inv] \to \C'[(\cW')^\inv]
\end{equation}
obtained by formally inverting the morphisms in $\cW$ and $\cW'$, respectively.  Work of Barwick and Kan \cite[1.8]{barwick-kan} shows that a relative functor is an equivalence of homotopy theories in the sense of \cref{def:eq-htpy} if and only if it induces a $DK$-equivalence between hammock localizations in the sense of \cite{dwyer-kan}.  Thus if $F$ is an equivalence of homotopy theories, then it induces 
\begin{itemize}
\item weak equivalences between mapping simplicial sets and
\item equivalences between categories of components.
\end{itemize} 
It follows that, if a relative functor $F$ is an equivalence of homotopy theories, then the induced functor $\ol{F}$ in \cref{Fbar} is an equivalence of categories.
Note that this conclusion holds independently of whether $F$ is assumed to be part of a Quillen adjunction.

On the other hand, suppose $\C$ is a model category with $\cW$ as the class of weak equivalences.   Its homotopy category $\Ho(\C)$ in the model categorical sense has
\begin{itemize}
\item the fibrant-cofibrant objects in $\C$ as objects and
\item homotopy classes of morphisms in $\C$ as morphisms.
\end{itemize} 
By the defining universal property of categorical localizations, there is an equivalence of categories
\[I_\C \cn \Ho(\C) \fto{\sim} \C[\cW^\inv]\]
that is natural in the following sense.
If $F \cn \C \to \C'$ is part of a Quillen adjunction, then the following induced diagram of functors is commutative up to a natural isomorphism.  
\begin{equation}\label{HoF}
\begin{tikzpicture}[vcenter]
\def\v{-1.4}
\draw[0cell]
(0,0) node (a1) {\Ho(\C)}
(a1)+(3,0) node (a2) {\Ho(\C')}
(a1)+(0,\v) node (b1) {\C[\cW^\inv]}
(a2)+(0,\v) node (b2) {\C'[(\cW')^\inv]}
;
\draw[1cell=.9]
(a1) edge node {\Ho(F)} (a2)
(a2) edge node {I_{\C'}} node[swap] {\sim} (b2)
(a1) edge node {\sim} node[swap] {I_\C} (b1)
(b1) edge node {\ol{F}} (b2)
;
\draw[2cell]
node[between=a1 and b2 at .5, rotate=210, 2label={below,\iso}] {\Rightarrow}
;
\end{tikzpicture}
\end{equation}
If, moreover, $F$ is part of a Quillen equivalence, then $\Ho(F)$ is an equivalence of categories.  In this case, the diagram \cref{HoF} and the 2-out-of-3 property imply that $\ol{F}$ is also an equivalence of categories.  Conversely, if $F$ is an equivalence of homotopy theories in the sense of \cref{def:eq-htpy}, then $\ol{F}$ is an equivalence of categories.  Thus \cref{HoF} and the 2-out-of-3 property imply that $\Ho(F)$ is an equivalence of categories.
\end{remark}

Relative functors are closed under composition, but they do not satisfy the 2-out-of-3 property in general.  However, the following observation follows from the 2-out-of-3 property of equivalences of homotopy theories.
\begin{lemma}\label{lemma:rel-2-of-3}
  Suppose $\htythy{\C,\cW}$, $\htythy{\C',\cW'}$, and $\htythy{\C'',\cW''}$ are relative categories, and suppose 
  \[
    F \cn \C \to \C' \andspace F' \cn \C' \to \C''
  \]
  are functors such that the following two statements hold.
  \begin{enumerate}
  \item The functor $F' \cn \htythy{\C',\cW'} \to \htythy{\C'',\cW''}$ creates morphisms in $\cW'$ and is an equivalence of homotopy theories.
  \item The composite $F'F \cn \htythy{\C,\cW} \to \htythy{\C'',\cW''}$ is a relative functor and is an equivalence of homotopy theories.
  \end{enumerate}
  Then $F \cn \htythy{\C,\cW} \to \htythy{\C',\cW'}$ is a relative functor and is an equivalence of homotopy theories.
\end{lemma}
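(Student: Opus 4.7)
The plan is to proceed in two steps. First I would verify that $F$ sends $\cW$ into $\cW'$, so is a relative functor. Then I would deduce that $F$ is an equivalence of homotopy theories from the 2-out-of-3 property for weak equivalences in the complete Segal space model structure of \cref{theorem:css-fibrant}.

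For the relative functor claim, let $w \in \cW$. Since $F'F$ is assumed to be a relative functor, $F'(F(w)) = (F'F)(w)$ lies in $\cW''$. The first hypothesis says that $F'$ creates morphisms in $\cW'$, meaning $\cW' = (F')^\inv(\cW'')$, so $F(w) \in \cW'$. Hence $F \cn \htythy{\C,\cW} \to \htythy{\C',\cW'}$ is a relative functor.

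For the equivalence of homotopy theories claim, I would apply the classification diagram functor $\Ncl$ to the decomposition $F'F = F' \circ F$ to obtain a commutative triangle of bisimplicial sets whose third edge is $\Ncl(F'F)$. A morphism of bisimplicial sets is a weak equivalence in the complete Segal space model structure if and only if its fibrant replacement is one---this follows from 2-out-of-3 applied to the naturality square for the fibrant replacement weak equivalences $X \to \mathsf{R}(X)$. Thus the hypotheses translate to saying that $\Ncl(F')$ and $\Ncl(F'F)$ are weak equivalences in the complete Segal space model structure. Then 2-out-of-3 yields that $\Ncl(F)$, and therefore $\mathsf{R}(\Ncl F)$, is a weak equivalence, so $F$ is an equivalence of homotopy theories.

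The argument presents no substantial obstacle; essentially all of the content comes from the 2-out-of-3 property for weak equivalences combined with unpacking the ``creates morphisms'' hypothesis. The only care point is verifying that ``$\mathsf{R}(\Ncl F)$ is a weak equivalence'' can be replaced by ``$\Ncl F$ is a weak equivalence'', which is itself an instance of 2-out-of-3.
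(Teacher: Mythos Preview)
Your proof is correct and follows essentially the same approach as the paper: the paper does not give a detailed argument but simply states that the lemma ``follows from the 2-out-of-3 property of equivalences of homotopy theories,'' and your two-step argument (first using creation of $\cW'$ to see that $F$ is a relative functor, then applying 2-out-of-3 at the level of classification diagrams) is exactly the unpacking of that remark.
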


Next we give two results that will be used below to show that certain functors are equivalences of homotopy theories.
Both follow from the observation that a natural transformation between functors induces a simplicial homotopy on nerves.
\begin{lemma}\label{lemma:hteq-transf}
  Suppose
  \[
    F,F' \cn \htythy{\C,\cW} \to \htythy{\C',\cW'}
  \]
  are relative functors and suppose $p \cn F \to F'$ is a natural transformation with each component in $\cW'$.
  Then $F$ is an equivalence of homotopy theories if and only if $F'$ is an equivalence of homotopy theories.
\end{lemma}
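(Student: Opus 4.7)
The plan is to produce a simplicial homotopy between the bisimplicial morphisms $\Ncl F$ and $\Ncl F'$ induced by $p$, and then to invoke the fact that in a simplicial model structure, simplicially homotopic maps are simultaneously weak equivalences.

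First I would check that, for each $n \ge 0$, whiskering with $p$ yields a natural transformation between the postcomposition functors
\[
F \circ (-),\; F' \circ (-) \cn \htythy{\C,\cW}^{\De[n]} \to \htythy{\C',\cW'}^{\De[n]}.
\]
Given an object $X \cn \De[n] \to \C$ of the source, the whiskered transformation $p \ast 1_X$ has components $p_{X(i)}$ indexed by objects $i$ of $\De[n]$; by hypothesis these all lie in $\cW'$, so $p \ast 1_X$ is a legitimate morphism in $\htythy{\C',\cW'}^{\De[n]}$. Naturality in $X$ and in $n$ is immediate from the functoriality of whiskering.

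Next, a natural transformation $\alpha \cn G \to G'$ between functors of small categories corresponds to a functor out of the product with the arrow category $[1]$, whose nerve is a simplicial homotopy between $NG$ and $NG'$. Applying this construction level-wise to the whiskered transformations from the previous paragraph, and using naturality in $n$, assembles a simplicial homotopy of bisimplicial sets
\[
\Ncl\htythy{\C,\cW} \times \De[1] \to \Ncl\htythy{\C',\cW'}
\]
from $\Ncl F$ to $\Ncl F'$, where $\De[1]$ is regarded as a bisimplicial set that is constant in the appropriate direction.

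Finally, by \cref{theorem:css-fibrant} the complete Segal space model structure is simplicial, so applying a functorial fibrant replacement $\mathsf{R}$ preserves the simplicial homotopy and yields a simplicial homotopy between $\mathsf{R}(\Ncl F)$ and $\mathsf{R}(\Ncl F')$ with fibrant codomain. In a simplicial model structure, simplicially homotopic maps into a fibrant object are identified in the homotopy category, so one is a weak equivalence if and only if the other is. Invoking \cref{def:eq-htpy} then gives the equivalence of homotopy theories in both directions. The principal obstacle will be the bookkeeping needed to keep the two simplicial directions of $\Ncl$ straight and to verify that the level-wise simplicial homotopies cohere into a genuine bisimplicial homotopy; once this coherence is in place, the model-categorical conclusion is essentially formal.
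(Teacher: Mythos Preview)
Your proposal is correct and follows essentially the same route as the paper: build the levelwise natural transformation on relative simplex categories, pass to nerves to get a simplicial homotopy of bisimplicial sets, and then conclude via the model structure. One small imprecision: a functorial fibrant replacement $\mathsf{R}$ need not literally \emph{preserve} the simplicial homotopy (it need not commute with $-\times\De[1]$); the cleaner phrasing, which is what the paper uses, is to postcompose the homotopy with the replacement map $Y\to\mathsf{R}Y$ and then invoke naturality of $\mathsf{R}$ together with 2-out-of-3 to transfer the conclusion to $\mathsf{R}(\Ncl F)$ and $\mathsf{R}(\Ncl F')$.
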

\begin{proof}
  Because the components of $p$ are in $\cW'$, it induces, for each $n \ge 0$, a natural transformation
  \[
    p^{\De[n]} \cn F^{\De[n]} \to (F')^{\De[n]}
    \cn \htythy{\C,\cW}^{\De[n]} \to \htythy{\C',\cW'}^{\De[n]}
  \]
  between the induced functors on relative simplex categories.
  Therefore, taking nerves, $N(p^{\De[n]})$ is a simplicial homotopy for each $n \ge 0$.
  Thus,
  \[
    \Ncl(p) \cn \Ncl(F) \to \Ncl(F')
    \cn \Ncl\htythy{\C,\cW} \to \Ncl\htythy{\C',\cW'}
  \]
  is a levelwise simplicial homotopy between morphisms of bisimplicial sets.  The result then follows from functoriality of fibrant replacements and 2-out-of-3 for weak equivalences in Rezk's complete Segal space model structure.
\end{proof}

The previous result will be useful on its own, but is also used in the proof of the following, from \cite[2.8,2.9]{gjo1}.
Its proof is similar to that of \cite[2.12]{johnson-yau-permmult}.
\begin{proposition}[{\cite[2.9]{gjo1}}]\label{gjo29}
  Suppose given relative functors
  \[
    F \cn \htythy{\C,\cW} \lradj \htythy{\C',\cW'} \cn G
  \]
  such that each of the composites $GF$ and $FG$ is connected to the respective identity functor by a zigzag of natural transformations whose components are in $\cW$ and $\cW'$, respectively.
  Then $F$ and $G$ are equivalences of homotopy theories.
\end{proposition}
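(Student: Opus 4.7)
The plan is to deduce both conclusions from \cref{lemma:hteq-transf}, applied along the given zigzags, together with the two-out-of-six property of weak equivalences in Rezk's complete Segal space model structure.

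First I would show that the composites $GF$ and $FG$ are each equivalences of homotopy theories. Write the given zigzag connecting $GF$ to $1_{\C}$ as a sequence of relative functors
\[
  GF = H_0, \ H_1, \ \ldots, \ H_n = 1_{\C} \cn \htythy{\C,\cW} \to \htythy{\C,\cW},
\]
where for each $i$ there is a natural transformation, in one direction or the other, between $H_i$ and $H_{i+1}$ whose components lie in $\cW$. The identity $1_{\C}$ is trivially an equivalence of homotopy theories, so iterated application of \cref{lemma:hteq-transf} along the zigzag shows that $GF$ is an equivalence of homotopy theories. A symmetric argument on $\htythy{\C',\cW'}$ shows that $FG$ is an equivalence of homotopy theories.

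Next I would apply the classification-diagram functor $\Ncl$, which is functorial, so
\[
  \Ncl(GF) = \Ncl(G) \circ \Ncl(F) \andspace \Ncl(FG) = \Ncl(F) \circ \Ncl(G).
\]
By the previous paragraph and \cref{def:eq-htpy}, both composites become weak equivalences in the complete Segal space model structure after fibrant replacement. Weak equivalences in any model category satisfy the two-out-of-six property: for composable morphisms $a$, $b$, $c$, if $ba$ and $cb$ are weak equivalences then $a$, $b$, $c$, and $cba$ are all weak equivalences. Applying this with $a = \Ncl(F)$, $b = \Ncl(G)$, $c = \Ncl(F)$ shows that $\Ncl(F)$ and $\Ncl(G)$ are weak equivalences, so $F$ and $G$ are equivalences of homotopy theories.

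The main technical subtlety I anticipate is verifying that each intermediate functor $H_i$ in the zigzag is itself relative, which is the standing hypothesis needed to apply \cref{lemma:hteq-transf} at each link. If some $H_i$ fails to preserve weak equivalences, one would instead argue directly at the level of classification diagrams, using that a natural transformation with components in $\cW'$ induces a simplicial homotopy between the associated morphisms of bisimplicial sets---as in the proof of \cref{lemma:hteq-transf}---and concluding via two-out-of-three for weak equivalences in the complete Segal space model structure after fibrant replacement.
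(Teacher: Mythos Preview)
Your approach is essentially the one the paper sketches: reduce to \cref{lemma:hteq-transf} along the zigzag to see that $GF$ and $FG$ are equivalences of homotopy theories, then extract $F$ and $G$ individually. The paper does not spell out the last step, but your two-out-of-six argument with $a=c=\Ncl(F)$ and $b=\Ncl(G)$ is the standard way to finish and is correct.

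One comment on the subtlety you flag. You are right that \cref{lemma:hteq-transf}, as stated, requires both functors to be relative, and the intermediate $H_i$ in an arbitrary zigzag need not be. However, your proposed workaround has the same gap: to speak of ``the associated morphism of bisimplicial sets'' for $H_i$ one already needs $H_i$ to induce a functor on each $\htythy{\C,\cW}^{\De[n]}$, which is exactly the relativity hypothesis. The honest fix is to observe that the zigzag assembles into a functor $Z \times \C \to \C$, with $Z$ the walking zigzag, whose restriction to each $\{i\}\times\C$ is $H_i$; this yields, for each $n$, a zigzag of morphisms in $\htythy{\C,\cW}^{\De[n]}$ between $GF(\alpha)$ and $\alpha$ natural in the \emph{object} $\alpha$, and hence a zigzag of simplicial homotopies between $\Ncl(GF)$ and the identity even when the intermediate $H_i$ do not act on morphisms of $\htythy{\C,\cW}^{\De[n]}$. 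In the applications of this paper the zigzags are single natural transformations (unit or counit), so the issue does not arise.
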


\subsection*{Adjunction Between Pointed Diagram Categories}

Next we describe a general context where a right adjoint of diagram categories induces an equivalence of homotopy theories.  We will apply this in \cref{sec:applications} to the length-one inclusion \cref{eq:i-incl}
\[
  i \cn \Fskel \to \Gskel
\]
with the codomain categories $(\Cat,\boldone)$ and $(\sSet,*)$.
\begin{definition}
  Suppose $(\A,*)$ is a cocomplete category with a chosen terminal object $*$.
  A \emph{pointed object} in $\A$ is a pair $(x,\iota^x)$ consisting of an object $x$ in $\A$ and a morphism $\iota^x \cn * \to x$ in $\A$.
  A \emph{pointed morphism} $f \cn (x,\iota^x) \to (y,\iota^y)$ between pointed objects is a morphism $f \cn x \to y$ such that $f \iota^x = \iota^y$.
  For pointed objects $(x,\iota^x)$ and $(y,\iota^y)$, the \emph{wedge} $x \wed y$ is the pointed object given by the pushout of $x \leftarrow * \to y$ in $\A$.
  A wedge indexed by an empty indexing set is defined to be the chosen terminal object $*$.
\end{definition}

\begin{convention}\label{convention:iCDA}
  Suppose $\cC$ and $\cD$ are small categories, each with a chosen initial and terminal basepoint object denoted $*$.  Suppose
  \[
    i \cn \cC \to \cD
  \]
  is a basepoint-preserving functor, so $i* = *$.
  Suppose $\A$ is a complete and cocomplete category with chosen terminal object $*$. 
  Let $\cCstA$ and $\cDstA$ denote the categories of terminal-object-preserving functors 
  \[
    (\cC,*) \to (\A,*) \andspace (\cD,*) \to (\A,*),
  \]
  respectively, with morphisms given by natural transformations.
  Then there is a functor
  \[
    i^* \cn \cDstA \to \cCstA
  \]
  given by precomposition with $i$ on objects and whiskering with $i$ on morphisms.
\end{convention}

We assume the context of \cref{convention:iCDA} for the remainder of this section.
\begin{definition}[Functor $L$]\label{definition:L-coend}
  Define a functor
  \[
    L \cn \cCstA \to \cDstA
  \]
  by the following coend construction in $\A$.
  For $X \in \cCstA$ and $t \in \cD$,
  \[
    (LX)t = \int^{n \in \cC} \bigvee_{\cDpunc(i(n), t)} Xn,
  \]
  where, recalling \cref{definition:zero}, $\cDpunc(i(n),t)$ denotes the set of nonzero morphisms in $\cD$ from $i(n)$ to $t$.
  If $\cDpunc(i(n),t)$ is empty (for example, if $n = *$ or if $t = *$), then the displayed wedge above is the chosen terminal object $*$.  This ensures that $(LX)* = *$ in $\A$.
  The definition of $LX$ on morphisms $t \to s$ in $\cD$
  is given by functoriality of the indexing sets $\cDpunc(i(n),t)$ and of the coend.
\end{definition}

\begin{definition}[Unit $\eta$]\label{definition:unit-eta}
  Define a unit natural transformation whose component at $X \in \cCstA$,
  \[
    X \fto{\eta_X} i^* L X,
  \]
  is given by the following composite, for each $p \in \cC$, where the first unlabeled arrow includes to the component at $1_{i(p)}$ and the second is the coend structure morphism for $n = p$.
  \[
    \begin{tikzpicture}[x=20mm,y=10mm,baseline={(c.base)}]
      \draw[0cell] 
      (0,0) node (a) {Xp}
      (1,-1) node (b) {\bigvee_{\cDpunc(i(p),i(p))} Xp}
      (2,0) node (c) {\bigl( i^*L X \bigr)p}
      ;
      \draw[1cell] 
      (a) edge node {} (b)
      (b) edge node {} (c)
      (a) edge node {\eta_{X,p}} (c)
      ;
    \end{tikzpicture}
    = 
    \int^{n \in \C} \bigvee_{\cDpunc(i(n),i(p))} Xn
  \]

  Naturality of $\eta_X$ with respect to morphisms $h\cn p \to p'$ in $\cC$ follows from the coend condition that the following square commutes, where the two arrows at left are induced by $i(h)$ and $Xh$, and the two arrows at right are the structure morphisms for $n=p$ and $n=p'$.
  \[
    \begin{tikzpicture}[x=40mm,y=20mm,baseline={(d.base)}]
      \draw[0cell] 
      (0,0) node (a) {\bigvee_{\cDpunc(i(p'),i(p'))} Xp}
      (a)+(45:1) node (b) {\bigvee_{\cDpunc(i(p),i(p'))} Xp}
      (a)+(-45:1) node (c) {\bigvee_{\cDpunc(i(p'),i(p'))} Xp'}
      (b)+(-45:1) node (d) {\bigl( i^*L X \bigr)p'}
      ;
      \draw[1cell] 
      (a) edge node {} (b)
      (b) edge node {} (d)
      (a) edge node {} (c)
      (c) edge node {} (d)
      ;
    \end{tikzpicture}
    = 
    \int^{n \in \C} \bigvee_{\cDpunc(i(n),i(p'))} Xn
  \]
  Naturality of $\eta$ with respect to morphisms $f\cn X \to X'$ in $\cCstA$ follows from naturality of $f$ with respect to morphisms in $\cC$.
\end{definition}

\begin{definition}[Counit $\epz$]\label{definition:counit-epz}
  Define a counit natural transformation whose component at $Y \in \cDstA$,
  \[
    L i^* Y \fto{\epz_Y} Y
  \]
  is given as follows.  In the diagram below, $\theta \cn i(n) \to t$ is a nonzero morphism in $\cD$, for $n \in \cC$ and $t \in \cD$.
  The top vertical arrow is given by the identity on the summand at $\theta$.
  Then the unlabeled dashed arrow is the universal morphism out of the wedge, induced by $Y\theta$ for each $\theta \in \cDpunc(i(n),t)$.
  \begin{equation}\label{eq:epz-theta}
    \begin{tikzpicture}[x=50mm,y=16mm,vcenter]
      \draw[0cell] 
      (0,0) node (a) {\int^{n \in \cC} \bigvee_{\cDpunc(i(n),t)} Yi(n)}
      (a.west) +(-9mm,0) node (a2) {\bigl( L i^* Y \bigr)t }
      (0,1) node (b) {\bigvee_{\cDpunc(i(n),t)} Yi(n)}
      (0,2) node (c) {Yi(n)}
      (1,0) node (d) {Yt}
      ;
      \draw[1cell] 
      (c) edge node {} (b)
      (b) edge node {} (a)
      (c) edge node[pos=.4] {Y\theta} (d)
      (a) edge[dashed] node[pos=.4] {\epz_{Y, t}} (d)
      (b) edge[dashed] node {} (d)
      (a) edge[equal] node {} (a2)
      ;
    \end{tikzpicture}
  \end{equation}
  The bottom vertical arrow in the above diagram is the structure morphism for the coend, and $\epz_{Y,t}$ is the induced universal morphism.  Naturality of $\epz_Y$ with respect to morphisms $t \to t'$ in $\cD$ follows from naturality of the universal morphisms above.  Naturality of $\epz$ with respect to morphisms $g \cn Y \to Y'$ in $\cDstA$ follows from naturality of $g$ with respect to morphisms in $\cD$ and uniqueness of the induced morphism
  \[
    \int^{n \in \C} \bigvee_{\cDpunc(i(n),t)} Yi(n) \to Y' t
  \]
  given by either $\epz_{Y,t}$ and $g_t$ or by $\bigvee_\theta g_{i(n)}$ and $\epz_{Y',t}$.
\end{definition}

\begin{proposition}\label{proposition:Li-adj}
  In the context of \cref{convention:iCDA}, there is an adjunction of categories
  \[
    L \cn \cCstA \lradj \cDstA \cn i^*.
  \]
\end{proposition}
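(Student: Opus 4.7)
The plan is to verify the two triangle identities for the candidate unit $\eta$ and counit $\epsilon$ constructed in \cref{definition:unit-eta,definition:counit-epz}. Combined with the naturality already established in those definitions, this will give the adjunction $L \dashv i^*$.

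First I would verify the simpler triangle identity $(i^*\epsilon_Y) \circ \eta_{i^*Y} = 1_{i^*Y}$, evaluated at each $p \in \cC$. By construction, $\eta_{i^*Y, p}$ sends $Yi(p)$ into the wedge summand indexed by $1_{i(p)} \in \cDpunc(i(p), i(p))$ and then through the coend structure morphism at $n = p$. The component $\epsilon_{Y, i(p)}$ is, by \cref{definition:counit-epz}, the universal map out of the coend whose restriction to the summand at $\theta \in \cDpunc(i(n), i(p))$ is $Y\theta$. Specializing to $n = p$ and $\theta = 1_{i(p)}$ gives $Y(1_{i(p)}) = 1_{Yi(p)}$, as required.

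Next I would verify the triangle identity $\epsilon_{LX} \circ L\eta_X = 1_{LX}$, evaluated at each $t \in \cD$. Both sides are endomorphisms of $(LX)t$, and by the universal property of the coend together with those of its constituent wedges, equality reduces to equality after precomposing with each wedge inclusion followed by each coend structure morphism, indexed by $n \in \cC$ and nonzero $\theta \cn i(n) \to t$. Tracing through the definitions: the summand $Xn$ at $\theta$ maps via $\eta_{X, n}$ into the summand at $1_{i(n)} \in \cDpunc(i(n), i(n))$ inside $(i^*LX)(n) = (LX)(i(n))$, then through the outer coend structure morphism of $L(i^*LX)$ at index $\theta$, and finally $\epsilon_{LX, t}$ applies $(LX)(\theta)$. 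By functoriality of $LX$, the map $(LX)(\theta)$ carries the summand at $1_{i(n)}$ in $(LX)(i(n))$ to the summand at $\theta$ in $(LX)t$, which is exactly the original coend structure morphism for $(LX)t$. Hence the composite agrees with the identity on each summand.

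The main obstacle is this second triangle identity: it requires bookkeeping through the iterated coend $Li^*LX$ and demands carefully distinguishing the two distinct roles played by the morphism $\theta$, namely as an indexing element of the outer wedge appearing in $L(i^*LX)$ and as an argument to the functoriality of $LX$ inside the counit $\epsilon_{LX}$. The essential point that makes the cancellation work is that the structure morphism of the coend $(LX)(i(n))$ composed with $(LX)(\theta)$ coincides with inclusion into the summand at $\theta$ in $(LX)t$, which is built into the definition of $L$ on morphisms of $\cD$ via the functoriality of the indexing assignment $t \mapsto \cDpunc(i(n), t)$.
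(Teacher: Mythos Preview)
Your proposal is correct and follows essentially the same approach as the paper: both verify the two triangle identities for the unit $\eta$ and counit $\epz$ of \cref{definition:unit-eta,definition:counit-epz} directly, by tracing summands through the coend and using that $\eta$ lands in the component at $1_{i(p)}$ while $\epz$ restricts to $Y\theta$ on the summand at $\theta$. The only cosmetic differences are that the paper handles the harder identity $\epz_{LX}\circ L\eta_X = 1_{LX}$ first and phrases it via uniqueness of the universal morphism out of the coend (reducing to a composite on indexing sets $\theta \mapsto (\theta,1_{i(m)}) \mapsto \theta$), whereas you trace an individual summand through $(LX)(\theta)$; these are the same computation.
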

\begin{proof}
  The unit and counit are those of \cref{definition:unit-eta,definition:counit-epz}, respectively.
  The two triangle identities are verified componentwise as follows.

  First, for $X \in \cCstA$ and $t \in \cD$, consider the composite
  \begin{equation}\label{eq:epz-eta}
    \begin{tikzpicture}[x=65mm,y=20mm]
      \draw[0cell] 
      (0,0) node (a) {
        \int^{p \in \cC} \bigvee_{\cDpunc(i(p),t)} Xp
      }
      (a.west)++(-8mm,0) node (a') {\bigl( LX  \bigr)t}
      (1,0) node (b) {
        \int^{p \in \cC} \bigvee_{\cDpunc(i(p),t)}
        \left[ \int^{n \in \cC} \bigvee_{\cDpunc(i(n),i(p))} Xn \right]
      }
      (1,-1) node (c) {
        \int^{n \in \cC} \bigvee_{\cDpunc(i(n),t)} Xn
      }
      (c.east)++(8mm,0) node (c') {\bigl( LX  \bigr)t.}
      ;
      \draw[1cell] 
      (a) edge node {L(\eta_{X,p})_t} (b)
      (b) edge node {\epz_{LX,t}} (c)
      (a') edge[equal] node {} (a)
      (c) edge[equal] node {} (c')
      ;
    \end{tikzpicture}
  \end{equation}
  The above composite is the unique universal morphism out of the coend $\bigl( LX  \bigr)t $ induced by the following composite, where we use the notation change $m = p$ on the left and $m = n$ on the right:
  \[
    \bigvee_{\cDpunc(i(m),t)} Xm
    \to
    \bigvee_{\cDpunc(i(m),t)} \bigvee_{\cDpunc(i(m),i(m))} Xm
    \to
    \bigvee_{\cDpunc(i(m),t)} Xm.
  \]
  In the above composite, the first morphism is induced by $\bigvee_{\cDpunc(i(m),t)} 1_{i(m)}$ on indexing sets, and the second by composition on indexing sets.
  Thus the above composite is the identity, and therefore so is \cref{eq:epz-eta}.

  Second, for $Y \in \cDstA$ and $p \in \cC$, consider the composite
  \begin{equation}\label{eq:eta-epz}
    \begin{tikzpicture}[x=60mm,y=20mm,vcenter]
      \draw[0cell] 
      (0,0) node (a) {\bigl( i^*Y  \bigr)p}
      (.4,-.6) node (a2) {\bigvee_{\cDpunc(i(p),i(p))} Yi(p)}
      (1,0) node (b) {\int^{n \in \cC} \bigvee_{\cDpunc(i(n),i(p))} Yi(n)}
      (1,-1) node (c) {Yi(p)}
      (c.east)++(8mm,0) node (c') {\bigl( i^*Y  \bigr)p.}
      ;
      \draw[1cell] 
      (a) edge node {\eta_{i^* Y, p}} (b)
      (b) edge node {(i^*\epz_{Y})_p = \epz_{i^*Y,i(p)}} (c)
      (c) edge[equal] node {} (c')
      (a) edge node {} (a2)
      (a2) edge node {} (b)
      ;
    \end{tikzpicture}
  \end{equation}
  The above composite is the identity because the first morphism is induced by including to the component at $1_{i(p)}$ and the composite to $Yi(p)$ is that defining $\epz_{i^*Y, i(p)}$ in \cref{eq:epz-theta} with $\theta = 1_{i(p)}$.
\end{proof}

\begin{lemma}\label{lemma:eta-iso}
  Suppose, in the context of \cref{convention:iCDA}, that $i$ is fully faithful.  Then the unit $\eta$ of the adjunction $L \dashv i^*$ is a natural isomorphism.
\end{lemma}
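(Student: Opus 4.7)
The plan is to construct an explicit componentwise inverse to $\eta$ using the universal property of the coend defining $LX$, with full faithfulness of $i$ as the key input. The whole argument is essentially a pointed incarnation of the co-Yoneda (density) formula.

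First I would use the hypothesis that $i$ is fully faithful and basepoint-preserving to argue that for every $n, p \in \cC$, the functor $i$ restricts to a bijection
\[\cC^\punc(n, p) \iso \cDpunc(i(n), i(p)).\]
Indeed, any factorization $n \to * \to p$ in $\cC$ maps to a factorization $i(n) \to * \to i(p)$ in $\cD$, and conversely any factorization $i(n) \to * \to i(p)$ in $\cD$ lifts uniquely through $i$ to such a factorization in $\cC$ by full faithfulness. Consequently
\[(i^*LX)(p) \iso \int^{n \in \cC} \bigvee_{\cC^\punc(n, p)} Xn.\]

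Next I would construct the inverse $\mu_{X, p} \cn (i^*LX)(p) \to Xp$ by the universal property of the coend. For each $n \in \cC$ and each nonzero $f \cn n \to p$, I send the summand indexed by $f$ to $Xp$ via $Xf \cn Xn \to Xp$; when $\cC^\punc(n, p)$ is empty the wedge is the terminal object and maps uniquely to $Xp$. The cowedge compatibility needed to descend to the coend is precisely the identity $X(f h) = Xf \circ Xh$ for $h \cn n \to n'$ in $\cC$ and nonzero $f \cn n' \to p$, which is just functoriality of $X$.

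Finally I would verify the two triangle identities. The composite $\mu_{X, p} \circ \eta_{X, p}$ equals $X(1_p) = 1_{Xp}$, since by definition $\eta_{X, p}$ includes $Xp$ into the summand indexed by $1_{i(p)}$, which corresponds to $1_p$ under the bijection above. For the other composite, an element $x$ in the summand indexed by $f \cn n \to p$ is sent by $\mu_{X, p}$ to $Xf(x) \in Xp$ and then by $\eta_{X, p}$ into the summand at $(p, 1_p)$; the coend identification applied with $h = f$ and the outer morphism $1_p \cn p \to p$ gives $(1_p, Xf(x)) \sim (1_p \circ f, x) = (f, x)$ in the coend, so this composite is the identity as well. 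Naturality of $\mu_{X, p}$ in both $p$ and $X$ is then automatic from the universal property of the coend, so $\mu$ provides a natural inverse to $\eta$. The main obstacle is the bookkeeping around the coend identifications in the second triangle and making sure the bijection on nonzero morphism sets passes cleanly through the coend; I do not anticipate any conceptual difficulty beyond that.
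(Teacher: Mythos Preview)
Your proposal is correct and follows essentially the same approach as the paper: both use full faithfulness of $i$ to identify $\cDpunc(i(n),i(p))$ with $\cC^\punc(n,p)$ and then invoke the pointed co-Yoneda/density isomorphism. The only difference is cosmetic---the paper cites the Yoneda Density Theorem as a known isomorphism and checks just the one composite $\mu\circ\eta = 1$, whereas you unpack the density map explicitly and verify both composites by hand.
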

\begin{proof}
  For $X$ in $\cCstA$ and $p \in \cC$, the Yoneda Density Theorem \cite[III.3.7.8]{cerberusIII} gives an isomorphism
  \[
    \int^{n \in \cC} \bigvee_{\cC^\punc(n,p)} Xn \fto{\iso} Xp
  \]
  that is natural with respect to morphisms $p \to p'$ in $\cC$.
  The assumption that $i$ is fully faithful implies there is an isomorphism
  \[
    \cDpunc(i(n),i(p)) \iso \cC^\punc(n,p) \forspace n,p \in \cC.
  \]
  The above isomorphism, applied to indexing sets, gives the middle isomorphism below:
  \begin{equation}\label{eq:eta-iso}
    Xp \fto{\eta_{X,p}}
    \int^{n \in \cC} \bigvee_{\cDpunc(i(n),i(p))} Xn \iso
    \int^{n \in \cC} \bigvee_{\cC^\punc(n,p)} Xn \fto{\iso} Xp.
  \end{equation}
  The above composite is the identity on $Xp$ because $\eta_{X,p}$ includes $Xp$ at the component indexed by $1_{i(p)}$ and the Yoneda Density isomorphism is induced by applying $X$ to the morphisms of $\cC^\punc(n,p)$.
  Therefore $\eta_{X,p}$ is an isomorphism with inverse given by the other morphisms of \cref{eq:eta-iso}.
\end{proof}

\subsection*{Equivalences of Homotopy Theories Between Pointed Diagram Categories}

\begin{definition}\label{definition:rind-hty-thy}
  Suppose, in the context of \cref{convention:iCDA}, that $\cS \subset \cCstA$ is a subcategory containing all objects of $\cCstA$, so  $\htythy{\cCstA,\cS}$ is a relative category.
  The \emph{right-induced subcategory} of $\cDstA$ is the subcategory 
  \[
    \cSi = (i^*)^\inv(\cS) \subset \cDstA
  \]
  consisting of morphisms $f$ such that $i^*(f) \in \cS$.
  Thus, $\htythy{\cDstA,\cSi}$ is a relative category and
  \[
    i^* \cn \htythy{\cDstA,\cSi} \to \htythy{\cCstA,\cS}
  \]
  is a relative functor with the morphisms of $\cSi$ created by $i^*$.
  If $\cS$ contains all isomorphisms, then so does $\cSi$.
  If $\cS$ satisfies the 2-out-of-3 property, then so does $\cSi$.
\end{definition}

\begin{theorem}\label{theorem:Li-adj-he}
  In the context of \cref{convention:iCDA}, suppose $\cS \subset \cCstA$ is a subcategory containing all objects and suppose, moreover, that the following three conditions hold.
  \begin{itemize}
  \item The functor $i\cn \cC \to \cD$ is fully faithful.
  \item The subcategory $\cS$ contains all isomorphisms.
  \item The subcategory $\cS$ has the 2-out-of-3 property.
  \end{itemize}
  Let $\cSi = (i^*)^\inv(\cS)$ be the right-induced subcategory of $\cDstA$.  Then the adjunction $L \dashv i^*$ from \cref{definition:L-coend} induces an equivalence of homotopy theories
  \[
    L \cn \htythy{\cCstA, \cS} \lrsimadj \htythy{\cDstA, \cSi} \cn i^*.
  \]
\end{theorem}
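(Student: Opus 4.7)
The plan is to verify the hypotheses of \cref{gjo29} applied to the adjunction $L \dashv i^*$ from \cref{proposition:Li-adj}. So I need to establish three things: that $L$ and $i^*$ are relative functors; that the unit $\eta \cn 1 \to i^*L$ has components in $\cS$; and that the counit $\epz \cn L i^* \to 1$ has components in $\cSi$.

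First I would check that $i^*$ and $L$ are relative functors. For $i^*$ this is immediate since $\cSi = (i^*)^\inv(\cS)$ by definition. For $L$, given a morphism $f$ of $\cS$, I need $L(f) \in \cSi$, i.e., $i^*L(f) \in \cS$. By \cref{lemma:eta-iso} the unit $\eta$ is a natural isomorphism (using that $i$ is fully faithful), so naturality gives $i^*L(f) = \eta_{X'} \circ f \circ \eta_X^{-1}$. Since $\cS$ contains all isomorphisms and satisfies 2-out-of-3 (which implies closure under composition), $i^*L(f) \in \cS$.

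Next, the unit components $\eta_X$ are in $\cS$ because they are isomorphisms by \cref{lemma:eta-iso}. For the counit, I would use the triangle identity established in the proof of \cref{proposition:Li-adj}, which gives, for each $Y \in \cDstA$,
\[
  (i^*\epz_Y) \circ \eta_{i^*Y} = 1_{i^*Y}.
\]
Since $\eta_{i^*Y}$ is an isomorphism, $i^*\epz_Y = \eta_{i^*Y}^\inv$ is also an isomorphism, hence lies in $\cS$. Therefore $\epz_Y \in \cSi$ by the definition of $\cSi$.

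With these verifications, the unit $\eta$ provides a (single-step) natural transformation $1_{\cCstA} \to i^*L$ with components in $\cS$, and the counit $\epz$ provides a natural transformation $Li^* \to 1_{\cDstA}$ with components in $\cSi$. Applying \cref{gjo29} then yields the conclusion that both $L$ and $i^*$ are equivalences of homotopy theories. The main potential obstacle was arranging the counit to have components in $\cSi$, but this dissolves neatly once one recognizes that the triangle identity forces $i^*\epz$ to be inverse to $\eta_{i^*Y}$, converting the fully-faithful hypothesis on $i$ (via \cref{lemma:eta-iso}) into the required invertibility.
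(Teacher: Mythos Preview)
Your proof is correct and follows essentially the same approach as the paper: both use \cref{lemma:eta-iso} to see that $\eta$ is a natural isomorphism, deduce that $L$ is a relative functor via naturality of $\eta$, and then use the triangle identity to show the counit has components in $\cSi$ before invoking \cref{gjo29}. The only cosmetic difference is that the paper appeals to 2-out-of-3 to conclude $i^*\epz_Y \in \cS$ from the triangle identity, whereas you observe more sharply that $i^*\epz_Y$ is literally the inverse of $\eta_{i^*Y}$ and hence an isomorphism.
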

\begin{proof}
  Since $i$ is fully faithful, each component of the unit $\eta$ is an isomorphism by \cref{lemma:eta-iso}.
  Therefore each $\eta_X$ is a morphism of $\cS$.
  This implies that $L$ is a relative functor, by naturality of $\eta$ and the 2-out-of-3 property for $\cS$.
  For each component $\epz_Y$, the right triangle identity
  \[
    (i^*\epz_Y) \circ \eta_{i^*Y} = 1_{i^*Y}
  \]
  from \cref{eq:eta-epz}, together with the 2-out-of-3 property for $\cS$, implies that $i^*\epz_Y$ is a morphism of $\cS$ and hence $\epz_Y$ is a morphism of $\cSi = (i^*)^\inv(\cS)$.
  Since the components of $\eta$, respectively $\epz$, are morphisms of $\cS$, respectively $\cSi$, the result then follows from \cref{gjo29}.
\end{proof}

\section{Applications to \texorpdfstring{$K$}{K}-Theory Multifunctors}
\label{sec:applications}

This section contains our main applications of \cref{theorem:Li-adj-he}, with $i \cn \Fskel \to \Gskel$ the inclusion of length-one tuples from \cref{definition:i-incl}.
At the end of the section, we give a separate application to Bohmann-Osorno $\cE_*$-categories.

\begin{definition}
  Let $\cS \subset \Sp_{\ge 0}$ denote the subcategory of \emph{stable equivalences} of connective spectra.  Then, in each of the following, let $\cS$ denote the subcategory of morphisms created by the indicated functor:
  \[
    \htythy{\permcatsu,\cS} \fto{\Jse}
    \htythy{\GaCat,\cS} \fto{N_*}
    \htythy{\GasSet,\cS} \fto{\Kf}
    \htythy{\Sp_{\ge_0},\cS}.
  \]
  In each case, we call morphisms in $\cS$ \emph{stable equivalences}.  Let $\cSi \subset \GsCat$ and $\cSi \subset \GssSet$ denote the right-induced subcategories, created by
  \[
    i^* \cn \GsCat \to \GaCat \andspace
    i^* \cn \GssSet \to \GasSet.
  \]
  The morphisms in $\cSi$ are called \emph{$i^*$-stable equivalences}.
\end{definition}

Since $i\cn \Fskel \to \Gskel$ is fully faithful and the stable equivalences of connective spectra include all isomorphisms and satisfy 2-out-of-3, we have the following application of \cref{theorem:Li-adj-he}.
\begin{corollary}\label{corollary:Li-htyeq}
  The adjunctions $L \dashv i^*$ are equivalences of homotopy theories
  \begin{align*}
    L\cn \htythy{\GaCat, \cS} & \lrsimadj \htythy{\GsCat, \cSi}\cn i^*
    \andspace\\
    L\cn \htythy{\GasSet, \cS} & \lrsimadj \htythy{\GssSet, \cSi}\cn i^*.
  \end{align*} 
\end{corollary}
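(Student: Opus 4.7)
The plan is to verify, in each of the two cases, the three hypotheses of \cref{theorem:Li-adj-he}, and then invoke it directly. Specialize \cref{convention:iCDA} with $\cC = \Fskel$, $\cD = \Gskel$, $i \cn \Fskel \to \Gskel$ the length-one inclusion of \cref{definition:i-incl}, and either $\A = (\Cat, \boldone)$ or $\A = (\sSet, *)$. Under these choices, \cref{definition:Ga-cat,definition:Gs-cat} identify $\cCstA$ and $\cDstA$ with $\GaCat$ and $\GsCat$ in the categorical case, and with $\GasSet$ and $\GssSet$ in the simplicial case, and the induced functor on pointed diagram categories coincides with the $i^*$ of \cref{eq:istar}.

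Next, verify the three hypotheses. The functor $i$ is fully faithful, as remarked just after \cref{definition:i-incl}: since the only injection $\ufs{1} \hookrightarrow \ufs{1}$ is the identity, the morphism set $\Gskel((\ord{n}),(\ord{m}))$ reduces to $\Fskel(\ord{n},\ord{m})$, and $i$ realizes this bijection. The subcategory $\cS \subset \Sp_{\ge 0}$ of stable equivalences of connective spectra contains all isomorphisms and satisfies the 2-out-of-3 property, by standard properties of weak equivalences. By definition, the subcategories $\cS \subset \GaCat$ and $\cS \subset \GasSet$ are the preimages of $\cS \subset \Sp_{\ge 0}$ under $\Kf N_*$ and $\Kf$, respectively. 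Both properties (containing all isomorphisms, and 2-out-of-3) are preserved under taking preimages by any functor, since functors preserve identities and composition.

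With the hypotheses in hand, \cref{theorem:Li-adj-he} applies to each case and produces the two equivalences of homotopy theories, where the right-induced subcategory $(i^*)^\inv(\cS)$ matches the $\cSi$ defined in the display preceding the corollary. There is no substantive obstacle: the conceptual work is already packaged in \cref{proposition:Li-adj} and \cref{theorem:Li-adj-he}, and what remains is essentially definitional, together with the elementary closure properties of the class of stable equivalences recorded above.
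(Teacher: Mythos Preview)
Your proposal is correct and follows exactly the paper's approach: the corollary is stated as a direct application of \cref{theorem:Li-adj-he}, and the paper's one-line justification (``Since $i\cn \Fskel \to \Gskel$ is fully faithful and the stable equivalences of connective spectra include all isomorphisms and satisfy 2-out-of-3'') is precisely what you spell out in more detail.
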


Now we use \cref{corollary:Li-htyeq} to prove our main results about equivalences of homotopy theories in \cref{eq:summary}.

\begin{theorem}\label{theorem:Jem-sma-N-hty-eq}
  The following are equivalences of homotopy theories:
  \begin{equation}
    \sma^* \cn \htythy{\GaCat, \cS}
    \fto{\sim} \htythy{\GsCat, \cSi},\label{it:smastar-hty-eq}
  \end{equation}
  \begin{equation}
    \sma^* \cn \htythy{\GasSet, \cS}
    \fto{\sim} \htythy{\GssSet, \cSi},\label{it:smastarsset-hty-eq}
  \end{equation}
  \begin{equation}
    \qquad
    \Jem \cn \htythy{\permcatsu, \cS}
    \fto{\sim} \htythy{\GsCat, \cSi},\andspace\label{it:Jem-hty-eq}
  \end{equation}
  \begin{equation}
    N_* \cn \htythy{\GsCat,\cSi}
    \fto{\sim} \htythy{\GssSet,\cSi}.\label{it:Nstar-hty-eq}
  \end{equation}
\end{theorem}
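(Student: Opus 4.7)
The plan is to dispatch each of the four claims using two main tools: the relative 2-out-of-3 style \cref{lemma:rel-2-of-3}, together with the equivalences $i^*$ established in \cref{corollary:Li-htyeq}. For \cref{it:smastar-hty-eq,it:smastarsset-hty-eq}, I would exploit the identity $\sma \circ i = 1_{\Fskel}$ noted just after \cref{definition:i-incl}. This gives $i^* \sma^* = 1$ on both $\GaCat$ and $\GasSet$. Since $\cSi = (i^*)^{-1}(\cS)$, the functor $\sma^*$ automatically sends $\cS$ into $\cSi$ and so is a relative functor, and the composite $i^* \sma^*$ is trivially an equivalence of homotopy theories. Applying \cref{lemma:rel-2-of-3} with $F = \sma^*$ and $F' = i^*$---which creates morphisms in $\cS$ by definition of $\cSi$, and is an equivalence of homotopy theories by \cref{corollary:Li-htyeq}---yields the conclusion for \cref{it:smastar-hty-eq,it:smastarsset-hty-eq}.

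For \cref{it:Jem-hty-eq}, the strategy is to compare $\Jem$ with $\sma^* \Jse$ via the natural transformation $\Pi^* \cn \sma^* \Jse \to \Jem$ from \cref{eq:summary}, whose components are stable equivalences by \cref{theorem:KgNPistar}. By \cref{lemma:hteq-transf}, it then suffices to show that $\sma^* \Jse$ is an equivalence of homotopy theories. This follows by composing the classical equivalence $\Jse \cn \htythy{\permcatsu, \cS} \to \htythy{\GaCat, \cS}$ (Segal, Bousfield-Friedlander, Thomason, Mandell, as recalled in the introduction) with $\sma^* \cn \htythy{\GaCat, \cS} \to \htythy{\GsCat, \cSi}$ from \cref{it:smastar-hty-eq}.

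For \cref{it:Nstar-hty-eq}, I would invoke \cref{lemma:rel-2-of-3} once more, this time with $F = N_*$ and $F' = i^* \cn \htythy{\GssSet, \cSi} \to \htythy{\GasSet, \cS}$. Using the identity $i^* N_* = N_* i^*$ noted after \cref{eq:summary}, the composite $F'F = N_* i^*$ factors through $\htythy{\GaCat, \cS}$ as the composition of $i^* \cn \htythy{\GsCat, \cSi} \to \htythy{\GaCat, \cS}$ (an equivalence by \cref{corollary:Li-htyeq}) with the classical equivalence $N_* \cn \htythy{\GaCat, \cS} \to \htythy{\GasSet, \cS}$; hence $F'F$ is an equivalence of homotopy theories. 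Since $F'$ creates morphisms in $\cS$ by definition of $\cSi$ and is itself an equivalence of homotopy theories, the hypotheses of \cref{lemma:rel-2-of-3} are met, delivering the conclusion.

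The main obstacle is largely bookkeeping rather than conceptual: one must confirm each candidate functor is relative with respect to the correctly right-induced subcategories at each stage, and ensure that the classical results for $\Jse$ and $N_*$ on $\Ga$-diagrams are cited in the exact form needed---equivalences of homotopy theories in the Barwick-Kan sense, not merely of homotopy categories, as discussed in \cref{rk:DK}. Once these relative-functor checks are in place, the substantive content reduces entirely to \cref{corollary:Li-htyeq} and the classical equivalences, chained together through \cref{lemma:rel-2-of-3,lemma:hteq-transf}.
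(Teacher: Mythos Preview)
Your treatments of \cref{it:smastar-hty-eq}, \cref{it:smastarsset-hty-eq}, and \cref{it:Nstar-hty-eq} match the paper's proof essentially verbatim: same use of $i^*\sma^*=1$, the square $i^*N_*=N_*i^*$, and \cref{lemma:rel-2-of-3} with $F'=i^*$.

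For \cref{it:Jem-hty-eq} there is a genuine gap in your citation. You invoke \cref{theorem:KgNPistar} to assert that the components of $\Pi^*\cn \sma^*\Jse\to\Jem$ are stable equivalences, but that theorem only says $\Kg N_*\Pi^*_\C$ is a stable equivalence of \emph{spectra}. To apply \cref{lemma:hteq-transf} you need each $\Pi^*_\C$ to lie in $\cSi$, i.e., that $\Kf N_* i^*\Pi^*_\C$ is a stable equivalence. These are not the same: the paper explicitly remarks (just before \cref{theorem:Kem-heq}) that the relationship between $\Kg$ and $\Kf i^*$ is unknown. The fact you need is rather that $i^*\Pi^*_\C$ is the \emph{identity} on $\Jse\C$, recorded in \cref{remark:pistar}; with that, $\Pi^*$ is trivially a componentwise $i^*$-stable equivalence and your \cref{lemma:hteq-transf} argument goes through.

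The paper avoids this detour entirely: it observes directly that $\Jse = i^*\Jem$ (the same underlying fact, since $i^*\Pi^*=1$ and $i^*\sma^*=1$) and then applies \cref{lemma:rel-2-of-3} with $F'=i^*$, $F=\Jem$, exactly parallel to the other three cases. This is cleaner because it simultaneously establishes that $\Jem$ is a relative functor, whereas in your approach you must separately argue relativity of $\Jem$ from naturality of $\Pi^*$ and 2-out-of-3 before \cref{lemma:hteq-transf} even applies.
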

\begin{proof}
  Consider the following.
  \[
    \begin{tikzpicture}[x=20mm,y=13mm,scale=1.6,vcenter]
      \draw[0cell] 
      (0,-.5) node (a) {\htythy{\permcatsu,\cS}}
      (1,0) node (b) {\htythy{\GaCat,\cS}}
      (1,-1) node (c) {\htythy{\GsCat,\cSi}}
      ;
      \draw[1cell] 
      (a) edge node {\Jse} node['] {\sim} (b)
      (a) edge node['] {\Jem} (c)
      (c) edge[',transform canvas={shift={(-.1,0)}}] node {i^*} node['] {\sim} (b)
      (b) edge[transform canvas={shift={(.15,0)}}] node {\sma^*} (c)
      ;
    \end{tikzpicture}
  \]
  The functors $\Jse$ and $i^*$ create stable, respectively $i^*$-stable, equivalences, and therefore are relative functors.
  Work of Thomason \cite{thomason} shows that $\Jse$ is an equivalence of homotopy theories.
  An independent proof by Mandell \cite{mandell_inverseK} constructs an inverse directly.
  \cref{corollary:Li-htyeq} shows that $i^*$ is an equivalence of homotopy theories.

  For \cref{it:smastar-hty-eq}, note that we have
  \[
    \sma \circ i = 1 \cn \Fskel \to \Fskel
  \]
  and hence $i^* \sma^* = 1$, the identity on $\GaCat$.
  This implies that $\sma^*$ is a relative functor and an equivalence of homotopy theories by \cref{lemma:rel-2-of-3} with $F' = i^*$ and $F = \sma^*$.  The proof for \cref{it:smastarsset-hty-eq} is the same by replacing $\Cat$ with $\sSet$.

  For \cref{it:Jem-hty-eq}, a similar reduction for smash products over singletons shows that $\Jse = i^*\Jem$ by unpacking the definitions \cite[10.3.1,10.3.27]{cerberusIII}.
  This implies that $\Jem$ is a relative functor and an equivalence of homotopy theories by \cref{lemma:rel-2-of-3} with $F' = i^*$ and $F = \Jem$. 
  
  For \cref{it:Nstar-hty-eq}, consider the following diagram, which commutes by associativity of functor composition.
  \[
    \begin{tikzpicture}[x=40mm,y=15mm]
      \draw[0cell] 
      (0,0) node (a) {\htythy{\GaCat,\cS}}
      (1,0) node (b) {\htythy{\GasSet,\cS}}
      (0,-1) node (c) {\htythy{\GsCat,\cSi}}
      (1,-1) node (d) {\htythy{\GssSet,\cSi}}
      ;
      \draw[1cell] 
      (a) edge node {N_*} node[']{\sim} (b)
      (c) edge node {N_*} (d)
      (c) edge node {i^*} node[']{\sim} (a)
      (d) edge['] node {i^*} node[']{\sim} (b)
      ;
    \end{tikzpicture}
  \]
  The two instances of $i^*$ create $i^*$-stable equivalences and are equivalences of homotopy theories by \cref{corollary:Li-htyeq}.
  The top functor $N_*$ creates the stable equivalences of $\Ga$-categories by definition.
  It induces an equivalence of homotopy theories by work of Mandell \cite{mandell_inverseK}.
  Therefore, by commutativity of the diagram and \cref{lemma:rel-2-of-3} again, the lower instance of $N_*$ is a relative functor and an equivalence of homotopy theories.
\end{proof}

Next we recall the following result due to Elmendorf-Mandell \cite{elmendorf-mandell,cerberusIII}.
We will use this to prove that $\Kem$ is an equivalence of homotopy theories.

\begin{theorem}[{\cite[10.6.10]{cerberusIII}}]\label{theorem:KgNPistar}
  The natural transformation $\Pi^*$ in \cref{eq:summary} induces a componentwise stable equivalence from Segal $K$-theory, $\Kse$, to Elmendorf-Mandell $K$-theory, $\Kem$.  That is,
  \[
    \Kg N_* \Pi^* \cn \Kf N_* \Jse \C = \Kg N_* \sma^* \Jse \C \to \Kg N_* \Jem \C
  \]
  is a stable equivalence of connective spectra for each small permutative category $\C$.
\end{theorem}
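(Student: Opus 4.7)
The plan is to reduce the statement to the Elmendorf--Mandell comparison theorem \cite[Theorem~1.1]{elmendorf-mandell}, which is the foundational result that their multifunctorial $K$-theory agrees up to stable equivalence with Segal $K$-theory. The strategy is to unpack the composite $\Kg N_* \Pi^*$ and identify its source and target with the two $K$-theory spectra named in the statement, then to identify the map itself with the standard comparison.

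First I would use the commutativity relations noted just after the introduction of \cref{eq:summary}: the right triangle gives $\Kg \sma^* = \Kf$, and the middle square gives $N_* \sma^* = \sma^* N_*$. Combining these yields
\[
  \Kg N_* \sma^* \Jse \C \;=\; \Kg \sma^* N_* \Jse \C \;=\; \Kf N_* \Jse \C \;=\; \Kse \C,
\]
while $\Kg N_* \Jem \C = \Kem \C$ by the factorization of $\Kem$ along the bottom of \cref{eq:summary}. Thus $\Kg N_* \Pi^*_\C$ is a well-defined morphism $\Kse \C \to \Kem \C$ of connective spectra, natural in $\C \in \permcatsu$.

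Next I would verify that this morphism agrees with the standard comparison map from Segal $K$-theory to Elmendorf--Mandell $K$-theory. This amounts to tracing through the construction of the natural transformation $\Pi^* \cn \sma^* \Jse \to \Jem$ of $\Gstar$-categories and checking that, on each tuple $\ordtu{n} \in \Gskel$, its component reproduces the structure map that Elmendorf--Mandell use to assemble their multifunctorial $K$-theory from the permutative data of $\C$. A useful sanity check is to restrict along $i \cn \Fskel \to \Gskel$: since $\sma \circ i = 1_\Fskel$ we have $i^* \sma^* \Jse = \Jse$, and one has $i^* \Jem = \Jse$ as noted in the proof of \cref{it:Jem-hty-eq}, so $i^* \Pi^*_\C$ should be (canonically isomorphic to) the identity on $\Jse \C$; this is compatible with $\Pi^*$ being detected as a natural transformation whose nontrivial content lives on tuples of length greater than one.

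The main obstacle will be the identification of $\Kg N_* \Pi^*_\C$ with the Elmendorf--Mandell comparison. The construction of $\Jem$ on objects of $\Gskel$ is intricate --- it depends on reindexing injections, lexicographic orderings in smash products, and the permutative structure on $\C$ --- and so verifying that the components of $\Pi^*$ match the structure maps built into the Elmendorf--Mandell operadic machinery is a careful bookkeeping exercise rather than a conceptual difficulty. Once this matching is in place, the required stable equivalence of connective spectra is an immediate consequence of \cite[Theorem~1.1]{elmendorf-mandell}.
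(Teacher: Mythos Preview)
The paper does not prove this theorem: it is stated with the attribution \cite[10.6.10]{cerberusIII} and used as a black box. So there is no proof in the paper to compare your proposal against. Your plan --- identify source and target via $\Kg\sma^* = \Kf$ and $N_*\sma^* = \sma^* N_*$, then reduce to the Elmendorf--Mandell comparison \cite[Theorem~1.1]{elmendorf-mandell} --- is a reasonable outline of how the cited result is established, and your sanity check that $i^*\Pi^*$ is the identity on $\Jse\C$ matches exactly what the paper records in \cref{remark:pistar}. The substantive content you flag as ``careful bookkeeping'' is precisely what is carried out in the cited reference, where $\Pi^*$ is defined and its compatibility with the Elmendorf--Mandell machinery is verified; the present paper simply imports that conclusion.
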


The functor $\Kem$ is the composite
\[
  \Kem = \Kg N_* \Jem.
\]
While
\[
  \Jem \cn \htythy{\permcatsu,\cS} \to \htythy{\GsCat,\cSi}
  \text{\ \ and\ \ }
  N_* \cn \htythy{\GsCat,\cSi} \to \htythy{\GssSet,\cSi}
\]
are relative functors, $\Kg$ may not be.  The reason is that, in order for $\Kg$ to be a relative functor, it would need to send an $i^*$-stable equivalence $f$ in $\GssSet$ to a stable equivalence $\Kg(f)$ between symmetric spectra.  However, we only know that $i^*(f)$ is a stable equivalence in $\GasSet$, which, in turn, means that $\Kf i^*(f)$ is a stable equivalence between symmetric spectra.  While there are equalities 
\[\Kg \sma^* = \Kf \andspace i^* \sma^* = 1_{\GasSet},\] 
we do not know how $\Kg$ is related to $\Kf i^*$.  Thus the first step in proving the following result is to use naturality of $\Pi^*$ to show that the composite $\Kem$ \emph{is} a relative functor.
\begin{theorem}\label{theorem:Kem-heq}
  The Elmendorf-Mandell $K$-theory functor is an equivalence of homotopy theories
  \[
    \Kem\cn \htythy{\permcatsu, \cS} \fto{\sim} \htythy{\Sp_{\ge 0}, \cS}.
  \]
\end{theorem}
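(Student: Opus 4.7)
The plan is to compare $\Kem$ with the Segal $K$-theory functor $\Kse = \Kf N_* \Jse$, using the natural transformation
\[
  \Kg N_* \Pi^* \cn \Kse \to \Kem
\]
furnished by \cref{theorem:KgNPistar}. Since $\Kse$ is known to be an equivalence of homotopy theories---by the combined work of Segal, Bousfield-Friedlander, Thomason, and Mandell recalled in the introduction---once we verify that $\Kem$ is a relative functor from $\htythy{\permcatsu,\cS}$ to $\htythy{\Sp_{\ge 0},\cS}$, and observe that $\Kg N_* \Pi^*$ has components in $\cS$, we may invoke \cref{lemma:hteq-transf} to transfer the conclusion to $\Kem$.

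The main subtlety, as flagged in the discussion preceding the statement, is that $\Kg$ is not known to preserve $i^*$-stable equivalences on its own, so we cannot check that $\Kem$ is a relative functor directly from its three constituent factors $\Jem$, $N_*$, and $\Kg$. Instead, suppose $f \cn \C \to \C'$ is a stable equivalence in $\permcatsu$, which by the definition at the start of \cref{sec:applications} means $\Kse(f) = \Kf N_* \Jse(f)$ is a stable equivalence in $\Sp_{\ge 0}$. Naturality of $\Pi^*$, whiskered on the left by $\Kg N_*$, yields a commutative square in $\Sp_{\ge 0}$ with horizontal arrows $(\Kg N_* \Pi^*)_\C$ and $(\Kg N_* \Pi^*)_{\C'}$ and vertical arrows $\Kse(f)$ and $\Kem(f)$. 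The two horizontal arrows are stable equivalences by \cref{theorem:KgNPistar}, and the left vertical arrow is a stable equivalence by hypothesis. The 2-out-of-3 property for stable equivalences of connective spectra therefore forces $\Kem(f)$ to be a stable equivalence. Thus $\Kem$ is a relative functor.

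Having established that $\Kse$ and $\Kem$ are both relative functors $\htythy{\permcatsu,\cS} \to \htythy{\Sp_{\ge 0},\cS}$, and that $\Kg N_* \Pi^*$ is a natural transformation between them whose components lie in $\cS$, the conclusion follows at once from \cref{lemma:hteq-transf}: since $\Kse$ is an equivalence of homotopy theories, so is $\Kem$. The only place where genuine work enters is the relative-functoriality check in the second paragraph, and even that reduces to a one-line diagram chase once \cref{theorem:KgNPistar} is in hand.
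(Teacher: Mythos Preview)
Your proof is correct and follows essentially the same route as the paper: use the naturality square of $\Kg N_* \Pi^*$ together with \cref{theorem:KgNPistar} and 2-out-of-3 to see that $\Kem$ is a relative functor, then apply \cref{lemma:hteq-transf} with $\Kse$ already known to be an equivalence of homotopy theories. The paper's own argument is virtually identical, differing only in presentation.
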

\begin{proof}
  For a strictly unital symmetric monoidal functor
  \[
    F \cn \C \to \D \inspace \permcatsu
  \]
  we have the following naturality square.
  \[
    \begin{tikzpicture}[x=40mm,y=15mm]
      \draw[0cell] 
      (0,0) node (a) {\Kse(\C)}
      (0,-1) node (c) {\Kse(\D)}
      (1,0) node (b) {\Kem(\C)}
      (1,-1) node (d) {\Kem(\D)}
      ;
      \draw[1cell] 
      (a) edge node {\Kg N_* (\Pi^*_{\C})} (b)
      (c) edge node {\Kg N_* (\Pi^*_{\D})} (d)
      (a) edge['] node {\Kse(F)} (c)
      (b) edge node {\Kem(F)} (d)
      ;
    \end{tikzpicture}
  \]
  By \cref{theorem:KgNPistar}, the components of  
  \[
    p = \Kg N_* \Pi^* \cn \Kse \to \Kem
  \]
  are stable equivalences of connective spectra.
  If $F$ is a stable equivalence in $\permcatsu$, then $\Kse(F)$, and therefore also $\Kem(F)$, are stable equivalences.

  This shows that $\Kem$ is a relative functor.
  The result then follows from \cref{lemma:hteq-transf} because $\Kse$ is an equivalence of homotopy theories \cite{segal,bousfield-friedlander}.
\end{proof}

\begin{remark}\label{remark:pistar}
We note that $\Pi^*$ is a componentwise $i^*$-stable equivalence.
  It follows from the definition of $\Pi$ in \cite[10.6.1]{cerberusIII} that $i^* \Pi^*_\C$ is the identity morphism of the $\Ga$-category $\Jse\C$, for each small permutative category $\C$.
  Therefore, the components of $\Pi^*$ are $i^*$-stable equivalences of $\Gstar$-categories.  We use this fact in \cref{pistarnatiso} below.
\end{remark}

\subsection*{Application to Bohmann-Osorno $K$-Theory}

Work of Bohmann-Osorno \cite{bohmann_osorno} gives a different multifunctorial $K$-theory construction, factoring through diagrams on a category $\cE$ that is similar to $\Gskel$, but uses the opposite simplex category, $\Deltaop$, in place of $\Fskel$ in \cref{definition:Gstar} above.
Note that this description of $\cE$ is different from that of \cite[5.1]{bohmann_osorno}, but pointed diagrams on the category $\cE$ as described here are the $\cE_*$-diagrams of \cite[5.4]{bohmann_osorno}.
There is a fully faithful inclusion of length-one tuples
\[
  j\cn \Deltaop \to \cE
\]
and hence \cref{theorem:Li-adj-he} gives the following.  A similar result for $\Deltaop_*\mh\sSet$ also holds.
\begin{corollary}\label{corollary:BO-htyeq}
  Suppose $\cS \subset \Deltaop_*\mh\Cat$ is a subcategory containing all objects, and suppose, moreover, that $\cS$ contains all isomorphisms and has the 2-out-of-3 property.
  Let $\cS^j = (j^*)^\inv\cS$ be the right-induced subcategory of $\cE_*\mh\Cat$.
  Then there is a left adjoint $L \dashv j^*$ inducing equivalences of homotopy theories
  \begin{align*}
    L\cn \htythy{\Deltaop_*\mh\Cat, \cS} & \lrsimadj \htythy{\cE_*\mh\Cat, \cS^j}\cn j^*.
  \end{align*} 
\end{corollary}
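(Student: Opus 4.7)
The plan is to deduce this corollary as a direct instance of \cref{theorem:Li-adj-he} with the setup $\cC = \Deltaop$, $\cD = \cE$, $i = j$, and $\A = \Cat$. First, I would verify that this setup fits the context of \cref{convention:iCDA}. The category $\Deltaop$ has its chosen basepoint (the one making $\cE$ a pointed wedge-indexed diagram construction analogous to $\Gskel$), and $\cE$ has a basepoint, with $j$ basepoint-preserving by construction. The category $\Cat$ is complete and cocomplete with chosen terminal object $\boldone$, so the pointed diagram categories $\Deltaop_*\mh\Cat$ and $\cE_*\mh\Cat$ are defined, and the precomposition functor
\[
j^*\cn \cE_*\mh\Cat \to \Deltaop_*\mh\Cat
\]
exists.

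Next I would check the three hypotheses of \cref{theorem:Li-adj-he}. The first hypothesis, that $j$ is fully faithful, is stated in the paragraph just before the corollary (mirroring the length-one inclusion $i\cn\Fskel \to \Gskel$). The second and third hypotheses, that $\cS$ contains all isomorphisms and satisfies 2-out-of-3, are built into the corollary's assumptions. Therefore the right-induced subcategory $\cS^j = (j^*)^{-1}\cS$ of $\cE_*\mh\Cat$ is well-defined by \cref{definition:rind-hty-thy}, and $j^*$ is a relative functor from $\htythy{\cE_*\mh\Cat,\cS^j}$ to $\htythy{\Deltaop_*\mh\Cat,\cS}$ that creates the morphisms of $\cS^j$.

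With all hypotheses verified, the conclusion of \cref{theorem:Li-adj-he} yields the desired adjunction $L \dashv j^*$ (with $L$ constructed as the coend in \cref{definition:L-coend}, unit $\eta$ from \cref{definition:unit-eta}, and counit $\epz$ from \cref{definition:counit-epz}) and the fact that both $L$ and $j^*$ are equivalences of homotopy theories between $\htythy{\Deltaop_*\mh\Cat,\cS}$ and $\htythy{\cE_*\mh\Cat,\cS^j}$. The analogous result for $\Deltaop_*\mh\sSet$ then follows by replacing $\Cat$ with $\sSet$ throughout, since $\sSet$ is also complete and cocomplete with a chosen terminal object.

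Since this corollary is essentially a citation of \cref{theorem:Li-adj-he} in a new setting, there is no real obstacle; the only work is confirming the hypotheses. The one point worth flagging, rather than a difficulty, is that the corollary presents $\cS$ abstractly (any subcategory with all objects, all isomorphisms, and 2-out-of-3) rather than fixing a particular stable equivalence class, which is why no further input from $K$-theory is needed—unlike in \cref{corollary:Li-htyeq}, where $\cS$ is specified as the class created by $\Jse$, $N_*$, and $\Kf$ from stable equivalences of spectra.
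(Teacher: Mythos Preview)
Your proposal is correct and matches the paper's approach exactly: the paper presents this corollary as an immediate application of \cref{theorem:Li-adj-he}, with the only input being that $j\cn \Deltaop \to \cE$ is fully faithful, together with the stated hypotheses on $\cS$. No separate proof is given in the paper beyond the sentence preceding the corollary, and your write-up simply makes that citation explicit.
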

\noindent Thus, the right-induced homotopy theory of $\cE_*$-categories is equivalent to the homotopy theory of pointed simplicial categories.

\section{Homotopy Inverses of \texorpdfstring{$K$}{K}-Theory Multifunctors}\label{sec:hinv}

In this section we identify explicit homotopy inverses of the $K$-theory multifunctors $\Jem$, $N_* \Jem$, and $\Kem$, each of which is an equivalence of homotopy theories by \cref{theorem:Jem-sma-N-hty-eq,theorem:Kem-heq}.  These homotopy inverses are defined in, respectively, \cref{eq:jeminv,eq:njeminv,eq:keminv}.  The following is a summary diagram.
\begin{equation}\label{eq:invsummary}
\begin{tikzpicture}[x=20mm,y=13mm,scale=1.6,vcenter]
    \draw[0cell] 
    (0,0) node (a) {\permcatsu}
    (a)+(30:1.1) node (b) {\GaCat}
    (a)+(-30:1.1) node (b') {\GsCat}
    (b)+(1,0) node (c) {\GasSet}
    (b')+(1,0) node (c') {\GssSet}
    (c)+(-30:1.1) node (d) {\Sp_{\ge 0}}
    ;
    \draw[1cell] 
    (a) edge node {\Jse} (b)
    (a) edge['] node {\Jem} (b')
    (b) edge node {N_*} (c)
    (b') edge node {N_*} (c')
    (c) edge node {\Kf} (d)
    (c') edge['] node {\Kg} (d)
    (b) edge[',transform canvas={shift={(-.1,0)}}] node[pos=.4] {\sma^*} (b')
    (b') edge node[',pos=.5] {i^*} (b)
    (c) edge[transform canvas={shift={(.1,0)}}] node[pos=.4] {\sma^*} (c')
    (c') edge[transform canvas={shift={(0,0)}}] node[pos=.5] {i^*} (c)
    ;
    \draw[1cell]
    (d) edge[bend right=40] node[swap] {\bA} (c)
    (c) edge[bend right] node[swap] {S_*} (b)
    (b) edge[bend right=40] node[swap] {\cP} (a)
    ;
    \draw[1cell]
    (a) [rounded corners=10pt] |- ($(b')+(0,-.35)$)
    -- node {\Kem} ($(c')+(0,-.35)$) -| (d)
    ;
    \draw[2cell]
    (a) +(-5:.55) node[rotate=-120, 2label={below,\Pi^*\!\!}] {\Rightarrow}
    ;
  \end{tikzpicture}
\end{equation}\ \\
The functors along the top, $\cP$, $S_*$, and $\bA$, are homotopy inverses of, respectively, $\Jse$, $N_*$, and $\Kf$.  The other functors are as in \cref{eq:summary}.

\subsection*{Homotopy Inverse of $\Jem$}

By \cref{remark:pistar} 
\begin{equation}\label{pistarnatiso}
\Pi^* \cn \sma^* \Jse \to \Jem \cn \htythy{\permcatsu, \cS} \to \htythy{\GsCat, \cSi}
\end{equation}
induces a natural isomorphism in the stable homotopy categories, obtained by inverting the respective classes of stable equivalences.  By \cref{corollary:Li-htyeq,theorem:Jem-sma-N-hty-eq}, both relative functors
\begin{equation}\label{smastaristar}
\begin{tikzcd}[column sep=large]
\htythy{\GaCat, \cS} \ar{r}{\sma^*}[swap]{\sim} 
& \htythy{\GsCat, \cSi} \ar{r}{i^*}[swap]{\sim} & \htythy{\GaCat, \cS}
\end{tikzcd}
\end{equation}
are equivalences of homotopy theories, and the analogous statement holds with $\sSet$ in place of $\Cat$.  The equality $i^* \sma^* = 1$ implies that the functors $\sma^*$ and $i^*$ are \emph{homotopy inverses} of each other.  This means that, passing to the stable homotopy categories, they induce mutually inverse equivalences of categories.  

A homotopy inverse of Segal $J$-theory, $\Jse$, is given by Mandell's inverse $K$-theory functor \cite[Theorem 4.5]{mandell_inverseK}
\begin{equation}\label{mandellP}
\cP \cn \htythy{\GaCat, \cS} \fto{\sim} \htythy{\permcatsu, \cS}.
\end{equation}
This is a non-symmetric $\Cat$-enriched multifunctor by \cite[Theorem 1.3]{johnson-yau-invK}.  Combining \cref{pistarnatiso,smastaristar,mandellP}, a homotopy inverse of Elmendorf-Mandell $J$-theory, $\Jem$, is given by the following composite.
\begin{equation}\label{eq:jeminv}
\begin{tikzcd}[column sep=large]
\htythy{\GsCat, \cSi} \ar{r}{i^*}[swap]{\sim} & \htythy{\GaCat, \cS} \ar{r}{\cP}[swap]{\sim} & \htythy{\permcatsu, \cS}
\end{tikzcd}
\end{equation}

We note that---while $\Jem$ and $\cP$ are $\Cat$-enriched multifunctors (non-symmetric in the case of $\cP$)---the functor $i^*$ does \emph{not} appear to be a multifunctor, even in the non-symmetric sense.  In particular, $i^*$ does not admit any reasonable monoidal constraint that is compatible with the monoidal structures on $\GaCat$ and $\GsCat$.

\subsection*{Homotopy Inverse of $N_* \Jem$}

A homotopy inverse of 
\[N_* \cn \htythy{\GaCat, \cS} \fto{\sim} \htythy{\GasSet, \cS}\]
is given by Mandell's functor $S_*$ in \cite[Proposition 2.7]{mandell_inverseK}, denoted $\cS$ there.  This is \emph{not} a multifunctor, as discussed in the introduction of \cite{johnson-yau-invK}.  
By \cref{pistarnatiso} and the equality 
\[N_* \sma^* \Jse = \sma^* N_* \Jse,\]
a homotopy inverse of the composite
\[\begin{tikzcd}[column sep=large]
\htythy{\permcatsu, \cS} \ar{r}{\Jem}[swap]{\sim} 
& \htythy{\GsCat, \cSi} \ar{r}{N_*}[swap]{\sim}
& \htythy{\GssSet, \cSi}
\end{tikzcd}\]
is given by the following composite.
\begin{equation}\label{eq:njeminv}
\begin{tikzcd}[column sep=normal]
\htythy{\GssSet, \cSi} \ar{r}{i^*}[swap]{\sim}
& \htythy{\GasSet, \cS} \ar{r}{S_*}[swap]{\sim} 
& \htythy{\GaCat, \cS} \ar{r}{\cP}[swap]{\sim} & \htythy{\permcatsu, \cS}
\end{tikzcd}
\end{equation}

\subsection*{Homotopy Inverse of $\Kem$}

A homotopy inverse of
\[\Kf \cn \htythy{\GasSet, \cS} \fto{\sim} \htythy{\Sp_{\ge_0},\cS}\]
is given by Segal's functor $\bA$ in \cite[Definition 3.1]{segal}.  The adjoint pair $\Kf \dashv \bA$ is a Quillen equivalence by \cite[Theorem 5.8]{bousfield-friedlander}.  By \cref{pistarnatiso} and the equalities 
\[\Kg N_* \sma^* \Jse = \Kg \sma^* N_* \Jse = \Kf N_* \Jse,\] 
a homotopy inverse of Elmendorf-Mandell $K$-theory, $\Kem = \Kg N_* \Jem$, is given by the following composite along the top of \cref{eq:invsummary}.
\begin{equation}\label{eq:keminv}
\begin{tikzcd}[column sep=normal]
\htythy{\Sp_{\ge_0},\cS} \ar{r}{\bA}[swap]{\sim} 
& \htythy{\GasSet, \cS} \ar{r}{S_*}[swap]{\sim} 
& \htythy{\GaCat, \cS} \ar{r}{\cP}[swap]{\sim} & \htythy{\permcatsu, \cS}
\end{tikzcd}
\end{equation}

We note that---while $\Kem$ and $\cP$ are enriched multifunctors (non-symmetric in the case of $\cP$)---the functor $\bA$ does \emph{not} appear to be a multifunctor, even in the non-symmetric sense.  Similar to $i^*$, the functor $\bA$ does not admit any reasonable monoidal constraint that is compatible with the monoidal structures on $\GasSet$ and $\Sp_{\geq 0}$.

\bibliographystyle{sty/amsalpha3}
\bibliography{references}
\end{document}